\documentclass[a4paper,final]{amsart}
\usepackage{amsfonts}
\usepackage{amssymb}
\usepackage[latin2]{inputenc}
\usepackage{enumerate}
\usepackage{amsmath}
\usepackage[notcite,notref]{showkeys}
\usepackage{amssymb}
\usepackage{comment}

\usepackage[active]{srcltx}  
\newtheorem{theorem}{Theorem}[section]
\newtheorem{lemma}[theorem]{Lemma}

\newtheorem{corollary}[theorem]{Corollary}

\newtheorem{prop}[theorem]{Proposition}

\theoremstyle{definition}
\newtheorem{remark}[theorem]{Remark}

\newcommand{\bt}{\begin{theorem}}
\newcommand{\et}{\end{theorem}}

\newcommand{\N}{\mathbb{N}}
\newcommand{\Z}{\mathbb{Z}}
\newcommand{\Q}{\mathbb{Q}}
\newcommand{\R}{\mathbb{R}}

\newcommand{\eps}{\varepsilon}
\newcommand{\de}{\delta}

\newcommand{\ii}{\mathbf{i}}

\def\beq{\begin{equation}}
\def\eeq{\end{equation}}
\newcommand{\floor}[1]{\lfloor #1 \rfloor}

\newcommand{\bp}{\begin{proof}}
\newcommand{\ep}{\end{proof}}

\DeclareMathOperator{\diam}{diam}

\DeclareMathOperator{\hdim}{dim_{\mathsf{H}}}
\DeclareMathOperator{\pdim}{dim_{\mathsf{P}}}
\DeclareMathOperator{\bdim}{dim_{\mathsf{B}}}
\DeclareMathOperator{\ubdim}{{\overline{dim}_{\mathsf{B}}}}
\DeclareMathOperator{\mubdim}{{\overline{dim}_{\mathsf{MB}}}}
\DeclareMathOperator{\lbdim}{{\underline{dim}_{\mathsf{B}}}}

\DeclareMathOperator{\spl}{SPL}
\DeclareMathOperator{\Id}{I_{\mathrm{d}}}

\newcommand{\su}{\subset}

\newcommand{\be}{\beta}

\renewcommand{\phi}{\varphi}

\title[Squares and their centers]
{Squares and their centers}

\author{Tam\'as Keleti}
\address{Institute of Mathematics\\
E\"otv\"os Lor\'and University\\
P\'azm\'any P\'eter s. 1/c, 1117
Budapest, Hungary}
\email{tamas.keleti@gmail.com}
\urladdr{http://www.cs.elte.hu/analysis/keleti}
\thanks{Part of this research was done when the first author was a visitor
at the Alfr\'ed R\'enyi Institute of Mathematics. He was also supported by
Hungarian Scientific Foundation grant no.~104178.
}

\author{D\'aniel T. Nagy}
\address{Institute of Mathematics\\
E\"otv\"os Lor\'and University\\
P\'azm\'any P\'eter s. 1/c, 1117
Budapest, Hungary}
\email{dani.t.nagy@gmail.com}

\author{Pablo Shmerkin}
\address{Department of Mathematics and Statistics\\
Torcuato Di Tella University and CONICET\\
Av. Figueroa Alcorta 7350\\
Buenos Aires, Argentina}
\email{pshmerkin@utdt.edu}
\urladdr{http://www.utdt.edu/profesores/pshmerkin}
\thanks{The third author was partially supported by Project PICT 2011-0436 (ANPCyT). Part of this research was done while the third author was visiting E\"{o}tv\"{o}s Lor\'{a}nd University.}

\begin{document}
\begin{abstract}
We study the relationship between the sizes of two sets $B, S\subset\mathbb{R}^2$ when $B$ contains either the whole boundary, or the four vertices, of a square with  axes-parallel sides and center in every point of $S$, where size refers to one of cardinality, Hausdorff dimension, packing dimension, or upper or lower box dimension. Perhaps surprinsingly, the results vary depending on the notion of size under consideration. For example, we construct a compact set $B$ of Hausdorff dimension $1$ which contains the boundary of an axes-parallel square with center in every point $[0,1]^2$, but prove that such a $B$ must have packing and lower box dimension at least $\tfrac{7}{4}$, and show by example that this is sharp. For more general sets of centers, the answers for packing and box counting dimensions also differ. These problems are inspired by the analogous problems for circles that were investigated by Bourgain, Marstrand and Wolff, among others.
\end{abstract}

\keywords{Squares, square vertices, Hausdorff dimension, box dimension, packing dimension}

\subjclass[2010]{Primary: 05B30, 28A78; Secondary: 05D99, 11P99, 42B25, 52C30}

\maketitle

\section{Introduction and statement of results}

\subsection{Introduction}

The inspiration for this work arose from a beautiful and deep result due independently to Bourgain \cite{Bourgain86} and Marstrand \cite{Marstrand87}: if a set $B\subset\R^2$ contains a circle with center in every point of the plane, then $B$ has positive Lebesgue measure. This result has been sharpened in many ways. Bourgain himself proved stronger, and sharp, $L^p$ bounds for the associated circle maximal operator, from which it follows as a corollary; his result extends to other curves with non-zero curvature everywhere. Wolff \cite[Corollary 3]{Wolff00} proved, as a corollary of strong smoothing estimates, that if $S$ is a subset of the plane with $\hdim S>1$ (where $\hdim$ stands for Hausdorff dimension) and $B$ contains a circle with center in each point of $S$, then the conclusion that $B$ has positive measure continues to hold. A real line variant of Bourgain's maximal operator bounds, with circles replaced by suitable Cantor sets, was recently established by {\L}aba and Pramanik \cite{LabaPramanik11}.

The purpose of this paper is to study variants of this kind of problems, in which circles are replaced by squares. Here, and throughout the paper, by a square we mean the boundary of a square with axes-parallel sides, unless otherwise indicated. The solutions to the circle problems described above involve a fair amount of intricate geometry related to the way in which families of thin annuli can intersect (in most cases, in addition to Fourier analytic techniques). By contrast, (neighborhoods of) squares intersect in a much simpler fashion: in essence, one only needs to understand the family of lines containing the sides, which is a one-dimensional problem. Also, families of squares with far away centers can have very large intersection (they may share part of a side), which is not the case for circles. Hence, although our problem is in some sense geometrically simpler, the answers are strikingly different and, as we will see, new phenomena emerge.

The problems we study, and the circle analogs that inspired them, belong to a wider family of ``Kakeya type'' problems, of which there are many important examples in geometric measure theory and harmonic analysis. They all share the following structure: for each $x$ in some parameter space $S$, there is a family of sets $\mathcal{F}_x\subset\mathbb{R}^d$, and one would like to understand how ``small'' can a set $B\subset\R^d$ be, given  that it contains an element of $\mathcal{F}_x$ for all $x\in S$. Here ``small'' might refer to Lebesgue measure, or some fractal dimension. Some well known examples include the Kakeya and Furstenberg problems (see e.g. \cite{Wolff99}), as well as a different problem involving circles of Wolff \cite{Wolff97}, in which $S=(0,\infty)$ and $\mathcal{F}_r$ is the family of circles with radius $r$.

\subsection{Statement of main results}

It was likely known to Wolff and others that a set containing a square centered at every point of the unit square can have zero Lebesgue measure (unlike the situation for circles). Our first main result says that, perhaps surprisingly, the Hausdorff dimension of such a set can be $1$ - the same as that of a single square, even if the set is required to be closed.

\begin{theorem} \label{t:Hausdorff}
There exists a closed set $B\subset\R^2$ of Hausdorff
dimension $1$ containing the boundary of a square with axes-parallel sides with center in every point of $\R^2$.
\end{theorem}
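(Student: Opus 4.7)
The plan is to construct the required $B$ as a closed set of Hausdorff dimension $1$, expressed as a countable union $B=\bigcup_{n\geq 1}B_n$ of compact sets each of Hausdorff dimension at most $1$, so that $\hdim B\leq 1$ by countable stability of Hausdorff dimension (with some mild care to ensure $B$ itself is closed, e.g.\ by showing that the closure of $\bigcup_n B_n$ does not add a set of larger dimension). The reverse bound $\hdim B\geq 1$ is automatic since $B$ contains non-degenerate segments. By taking countably many integer translates, it suffices to construct such a $B_0$ containing a square boundary centered at every point of $[0,1]^2$.

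A natural first attempt takes $B$ of the form $(\R\times A)\cup(A\times\R)$ for a small set $A\subset\R$, so that the two horizontal sides of each square sit on lines of $\R\times A$ and the two vertical sides on lines of $A\times\R$. This approach is forced to fail: the requirement that every $(x,y)\in\R^2$ admit some $r>0$ with $x\pm r,y\pm r\in A$ gives in particular $A+A=\R$, which forces $\hdim A\geq 1/2$ (using $\hdim(A+A)\leq\hdim A+\pdim A\leq 2\pdim A$ and $\hdim\leq\pdim$) and hence $\hdim B\geq 1+\hdim A\geq 3/2$. Therefore the construction must use truncated segments of horizontal and vertical lines, not full lines. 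The key observation that makes Hausdorff dimension $1$ nonetheless possible is that a single long horizontal segment $[a,b]\times\{c\}$ can simultaneously serve as the top (resp.\ bottom) side of an uncountable two-parameter family of squares, namely of every square of side $2r$ centered at some $(x,c-r)$ (resp.\ $(x,c+r)$) with $[x-r,x+r]\subseteq[a,b]$, and analogously for vertical segments. Thus even a finite collection of horizontal and vertical segments can give rise to uncountably many distinct square boundaries, provided the top/bottom/left/right sides for each required center can be matched consistently.

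Accordingly I would take each $B_n$ to be a finite union of horizontal and vertical segments at scale $\sim 2^{-n}$, with heights in a finite set $H_n\subset\R$ and vertical $x$-coordinates in a finite set $V_n\subset\R$. The set of centers $(x,y)$ that $B_n$ handles is then precisely the $(x,y)$ for which there exist $h_1<h_2$ in $H_n$ and $v_1<v_2$ in $V_n$ satisfying $h_2-h_1=v_2-v_1$, $h_1+h_2=2y$, $v_1+v_2=2x$, with the chosen segments long enough to cover the required sides. The main obstacle is combinatorial: one must design the sequences $(H_n),(V_n)$ and the segment lengths so that the union of these handled center sets exhausts $[0,1]^2$. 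I expect to solve this by taking $(H_n,V_n)$ to be finite arithmetic progressions (or shifted dyadic patterns) whose offsets are tuned recursively so that at each level $n$ a positive proportion of the centers not yet covered at earlier levels gets absorbed, reducing the uncovered set to the empty set in the limit. Verifying the feasibility of this tuning is the principal technical step.
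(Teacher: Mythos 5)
Your proposal contains a fatal error at the very first step: the argument by which you dismiss the ``lines'' construction $B=(A\times\R)\cup(\R\times A)$ is invalid, and in fact that construction is exactly how the theorem is proved. From $A+A=\R$ and $\hdim(A+A)\le\hdim A+\pdim A\le 2\pdim A$ you may conclude $\pdim A\ge\tfrac12$, but \emph{not} $\hdim A\ge\tfrac12$: the inequality $\hdim\le\pdim$ points the wrong way for that deduction. There do exist closed sets $A\subset\R$ with $\hdim A=0$ (necessarily of large packing and box dimension) such that for all $x,y$ some $r>0$ has $x\pm r,\,y\pm r\in A$; this is obtained from a Davies--Marstrand--Taylor-type construction, namely a closed set $A$ of Hausdorff dimension zero such that $\bigcap_{i=1}^m f_i(A)\ne\varnothing$ for any finite family of invertible affine maps $f_i$ (apply this to $(A-x)\cap(x-A)\cap(A-y)\cap(y-A)$ and intersect once more with a reflection to rule out $\{0\}$). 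Then $\hdim\bigl((A\times\R)\cup(\R\times A)\bigr)\le\hdim A+\pdim\R=1$ by the product inequality $\hdim(X\times Y)\le\hdim X+\pdim Y$, which settles the theorem. Your intuition that the lines approach forces dimension at least $\tfrac32$ is correct in spirit only for packing and box dimensions (where the true threshold is $\tfrac74$); the whole point of the theorem is that Hausdorff dimension behaves differently.

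The alternative route you propose cannot be repaired. If $B_n$ is a finite union of horizontal segments at heights in a finite set $H_n$ and vertical segments at abscissas in a finite set $V_n$, then, as your own characterization shows, every center handled by $B_n$ has $y=(h_1+h_2)/2$ with $h_1,h_2\in H_n$ and $x=(v_1+v_2)/2$ with $v_1,v_2\in V_n$; hence $B_n$ handles at most $|H_n|^2|V_n|^2$ centers, a finite set. A countable union of such levels handles only countably many centers and can never exhaust $[0,1]^2$, no matter how the progressions are tuned. The ``uncountable two-parameter family'' served by a single segment is irrelevant here, because completing the square requires the opposite side to lie at another of the finitely many admissible heights, which discretizes the center. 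So the principal technical step you defer is not merely unverified --- it is impossible as set up, and the proposal does not constitute a proof.
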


Note that if the set of centers is some nonempty subset $S\subset\R^2$, then the smallest possible dimension of $B$ is still $1$.  It will be clear from the proof that if we wanted centers only in some bounded subset of $\R^2$, then $B$ could be taken to be compact.

This counter-intuitive result suggests that perhaps Hausdorff dimension is not the ``correct'' notion of size for this problem. Other useful notions of dimension are packing dimension $\pdim$ and lower and upper box counting dimensions, $\lbdim$ and $\ubdim$. See e.g. \cite[Chapter 3]{Falconer03} for their definitions. For these notions of dimension, we obtain a different answer.

\begin{theorem} \label{t:packingbox-all-centers}
Let $\dim$ be one of $\pdim, \lbdim$ or $\ubdim$. Then $\dim B\ge \tfrac{7}{4}$ for any set $B$ which contains the boundary of a square with axes-parallel sides with center in every point of $[0,1]^2$. Moreover, there exists a compact set $B$ with this property, such that $\pdim B=\bdim B=\tfrac{7}{4}$
\end{theorem}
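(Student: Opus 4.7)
The proof has two parts: the lower bound $\dim B \ge 7/4$ and a matching construction.

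\emph{Lower bound.} For each $(x, y) \in [0,1]^2$ pick an $s(x, y) > 0$ so that the square boundary with center $(x,y)$ and half-side $s(x,y)$ lies in $B$. Set $V = \{x \pm s(x, y) : (x, y) \in [0, 1]^2\}$ and $H = \{y \pm s(x, y) : (x, y) \in [0, 1]^2\}$, the sets of $x$-coordinates of vertical sides and $y$-coordinates of horizontal sides, respectively. The strategy is to prove
\[
\dim B \ge 1 + \max(\dim V, \dim H) \qquad \text{and} \qquad \dim V + \dim H \ge 3/2,
\]
which together yield $\dim B \ge 7/4$. The first bound follows by pigeonholing the centers onto a dyadic range $s(x,y) \in [r, 2r]$: the horizontal sides of those squares are segments of length $\approx r$ at heights in a large subset of $H$, giving $N(B, \delta) \gtrsim N(H, \delta)/\delta$ by a standard box-counting estimate; the vertical sides are handled symmetrically.

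The second bound is the crux. For each $(x, y) \in [0, 1]^2$ there is a quadruple $(a, b, c, d) \in V \times V \times H \times H$ with $a < b$, $c < d$, $(a+b)/2 = x$, $(c+d)/2 = y$ and $b - a = d - c$. Hence the set
\[
E = \{(a, b, c, d) \in V \times V \times H \times H : b - a = d - c\}
\]
projects via the center map onto a superset of $[0, 1]^2$, giving $\dim E \ge 2$. On the other hand, $E$ is the slice of the $4$-dimensional product $V \times V \times H \times H$ by the hyperplane $b - a - d + c = 0$. Writing $E$ as a one-parameter union over $t$ of the sets $(V \cap (V - t)) \times (H \cap (H - t))$ (up to an affine change of coordinates) and applying the Marstrand--Mattila intersection bound $\dim(V \cap (V - t)) \le 2\dim V - 1$ for a.e.\ $t$ (and the analogue for $H$) yields the slicing estimate $\dim E \le 2\dim V + 2\dim H - 1$. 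Comparing the two gives $\dim V + \dim H \ge 3/2$, so $\max(\dim V, \dim H) \ge 3/4$ and hence $\dim B \ge 7/4$.

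\emph{Construction.} For the matching upper bound I would construct a compact set $H \subset [-1, 2]$ of packing and box dimensions both equal to $3/4$ with the following covering property: for every $(x, y) \in [0, 1]^2$ there exists $s > 0$ with $\{x \pm s, y \pm s\} \subset H$. Setting $V = H$ and
\[
B = ([-1, 2] \times H) \cup (H \times [-1, 2]),
\]
$B$ is compact, contains the boundary of a square centered at every point of $[0, 1]^2$, and has $\pdim B = \bdim B = 1 + \dim H = 7/4$. The combinatorial content is the construction of $H$: I would take it to be a self-similar Cantor set with a carefully chosen digit pattern---for example a $16$-adic construction with $8$ digits per scale (giving $\dim H = \log 8/\log 16 = 3/4$) designed so that at each scale every pair $(x, y) \in [0, 1]^2$ can be served by an explicit choice of $s$.

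\emph{Main obstacle.} The most delicate step is the slicing inequality for $\dim E$: the Marstrand--Mattila intersection bound holds only for a.e.\ $t$, so the exceptional $t$'s must be controlled, either via the appropriate slicing theorem for packing and box dimensions or by an averaging argument over a small auxiliary shift of $H$. For lower box dimension in particular one needs a discretized version of the argument at a carefully chosen sequence of scales. In the construction the principal challenge is ensuring the covering property for \emph{every} $(x, y) \in [0, 1]^2$: a random set $H$ of size $|H_\delta| \sim \delta^{-3/4}$ would give the covering property only in expectation (and a Cauchy--Schwarz count only forces $\dim V + \dim H \ge 4/3$), so a deterministic arithmetic arrangement of the digits is required.
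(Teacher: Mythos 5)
Your overall architecture matches the paper's: reduce to the one\--dimensional sets of side\--coordinates $V,H$, prove a combinatorial lower bound forcing $\max(\dim V,\dim H)\ge 3/4$, and match it with an arithmetic Cantor\--type set $H$ of dimension $3/4$ together with $B=(H\times\R)\cup(\R\times H)$. However, the crux of your lower bound --- the inequality $\dim E\le 2\dim V+2\dim H-1$ for $E=\{(a,b,c,d)\in V\times V\times H\times H: b-a=d-c\}$ --- is not established by your argument and is in fact false for general sets. The Marstrand--Mattila bound $\dim(V\cap(V-t))\le 2\dim V-1$ holds only for Lebesgue\--a.e.\ $t$, while the side lengths $t$ that actually occur here form a (typically null) set on which the intersections can be as large as $\dim V$ itself; there is no genericity to exploit since both the hyperplane and the product set are fixed. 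Concretely, for $V=H=\{\sum_i\eps_i4^{-i}:\eps_i\in\{0,1\}\}$ (dimension $1/2$) one computes $\dim E=\log 6/\log 4>1=2\dim V+2\dim H-1$. The best one gets from the slicing/energy viewpoint is the Cauchy--Schwarz bound $\dim V+\dim H\ge 4/3$ (which, as you note yourself, is insufficient: it only yields $\dim B\ge 5/3$). The inequality $\dim V+\dim H\ge 3/2$ \emph{is} true, but the paper obtains it (in the form $|B|\ge\Omega(|S|^{3/4})$ for $B=V\times H$) by a purely combinatorial two\--case argument on lines of slope $\pm1$: either most points of $S$ lie on ``short'' diagonals, in which case $\sum\sqrt{p_i}$ is large, or most lie on few ``long'' diagonals, in which case one counts along the transverse diagonals. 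Some replacement for your slicing step of this strength is indispensable.

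There is a second, smaller gap in the construction. You correctly identify that everything reduces to a compact $H$ with $\pdim H=\bdim H=3/4$ such that every $(x,y)\in[0,1]^2$ admits $s>0$ with $x\pm s,y\pm s\in H$, but the set you sketch (a self\--similar set keeping $8$ of $16$ digits per scale) is not shown to have the covering property, and a one\--digit\--per\--scale pattern does not obviously support it. The paper's construction works block\--wise with four digit positions per block: $D_k=\{a+bk+ck^2+dk^3: a,b,c,d\in\{-k+1,\dots,2k-2\},\ abcd=0\}$, so that $|D_k|=O(k^3)$ out of a window of length $O(k^4)$, and the choice $r=x_0-x_1k+y_2k^2-y_3k^3$ (two digits from $x$, two from $y$) serves every pair; the exponent $3/4$ is only attained in the limit $k\to\infty$, so the final set is an infinite sum of rescaled $D_k$'s rather than a strictly self\--similar set, and its box/packing dimension must then be computed separately. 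Supplying this arithmetic construction, or an equivalent one, is the essential missing ingredient on the construction side.
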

Here, and in the sequel, whenever we take the (upper or lower) box counting dimension of a set, this is assumed to be bounded. To the best of our knowledge, this is the first instance in which the critical dimension for a Kakeya-type problem is known to depend on the notion of dimension under consideration.

Unlike the Hausdorff dimension problem, now it makes sense to restrict the centers to a subset $S\su\mathbb{R}^2$, and the natural set of problems to consider is the relationship between $\dim B$ and $\dim S$ for a given notion of dimension $\dim$ (i.e. we impose the same notion of ``size'' for the set of centers and the union of squares). Perhaps surprisingly, we now get a different answer for packing dimension and for box counting dimensions.

\begin{theorem} \label{t:packingbox-arbitrary-centers}
Let $S, B\subset\R^2$ be sets such that $B$ contains the boundary of a square with axes-parallel sides and center in every point of $S$. Then:
\begin{enumerate}
\item If $\dim=\ubdim$ or $\lbdim$, then $\dim B\ge \max(1,\tfrac{7}{8}\dim S)$.
\item $\pdim B\ge 1+\tfrac{3}{8}\pdim S$.
\end{enumerate}

Conversely, for each $s\in [0,2]$ there are compact sets $S, B$ as above such that:
\begin{enumerate}
\item[(a')] $\bdim S=s$ and $\bdim B= \max(1,\tfrac{7}{8}s)$.
\item[(b')] $\pdim S=s$ and $\pdim B=1+\tfrac{3}{8}s$.
\end{enumerate}
\end{theorem}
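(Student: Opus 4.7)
The proof comprises two lower bounds, (a) and (b), and two matching constructions, (a') and (b'); the box-counting bound is a direct extension of the proof of Theorem~\ref{t:packingbox-all-centers} to arbitrary center sets $S$, whereas the packing bound and both constructions require new ideas. For the lower bound (a), fix $\delta > 0$, set $s = \ubdim S$, and take a maximal $\delta$-separated subset $S_\delta \subset S$ of cardinality $\gtrsim \delta^{-s}$; by pigeonholing we may assume the radii $r_x$ of the chosen squares $Q_x \subset B$ all lie in a common dyadic scale $[r, 2r]$. The plan is to lower-bound the $\delta$-covering number $N_\delta(B)$ by counting incidences between the $4\#S_\delta$ sides and $\delta$-boxes of $B$, and to control the multiplicity via a higher-moment incidence estimate that generalizes the $s = 2$ case already handled in Theorem~\ref{t:packingbox-all-centers}. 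This gives $N_\delta(B) \gtrsim \delta^{-7s/8}$, and combining with the trivial $\ubdim B \ge 1$ yields (a).

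For the lower bound (b) we invoke the decomposition characterization $\pdim B = \inf\{\sup_i \ubdim B_i : B = \bigcup_i B_i,\; B_i \text{ closed}\}$. Given any such countable decomposition, the plan is to apply Baire category on each of the four sides of each $Q_x$ to select, for every $x \in S$, a quadruple $(i_1(x),\ldots,i_4(x))$ and a scale $\ell(x) > 0$ so that $B_{i_j(x)}$ contains an arc of length $\ge \ell(x)$ on the $j$-th side of $Q_x$. Countable pigeonholing—which preserves packing dimension because $\pdim$ is countably stable—produces a subset $S' \subset S$ with $\pdim S' = \pdim S$, a fixed quadruple of indices $(i_1,\ldots,i_4)$, and a uniform lower bound $\ell > 0$ on arc-lengths. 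Applying a refined version of the incidence argument from (a) to the finite union $B' = B_{i_1} \cup \cdots \cup B_{i_4}$—which contains, for every $x \in S'$, four orthogonal arcs of length $\ge \ell$ arranged in a square configuration—produces $\ubdim B' \ge 1 + \tfrac{3}{8}\pdim S$. Since $\ubdim B' = \max_j \ubdim B_{i_j}$, taking the infimum over decompositions gives (b); the additive constant $1$ reflects the one-dimensional contribution of a single arc at the fixed scale $\ell$, and the fraction $3/8$ arises from the reduced combinatorics of fixed-length arcs.

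For the matching constructions I take $S = A \times A$ with $A \subset [0,1]$ a suitable self-similar Cantor set, and for each $x \in S$ choose a radius $r_x$ lying in an auxiliary Cantor set $R$, so that the sides of $Q_x$ lie on the restricted family of lines $A \pm R$. In (a'), optimizing the dimensions of $A$ and $R$ against a sumset estimate for $A \pm R$ gives $\bdim B = \max(1, 7s/8)$; to avoid conflict with (b), the set $A$ is chosen so that $\pdim A = 0$ (for instance a sparse decaying sequence), so that the stronger packing lower bound for $B$ reduces to the trivial $\pdim B \ge 1$. In (b'), I take $S$ as a countable disjoint union of such product sets arranged at scales tending to $0$ so that $\pdim S = s$; the countable stability of $\pdim$ then allows $\pdim B = 1 + \tfrac{3}{8}s$, matching the lower bound of (b). The main obstacle is (b): the Baire--pigeonhole step must be carried out so as to preserve packing dimension, and the side-incidence counting from (a) must then be adapted to fixed-length arcs, which is precisely what yields the constant $1$ and the factor $3/8$ in the packing bound.
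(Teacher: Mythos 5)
Your overall architecture (discretize for the lower bounds, Cantor-type and multi-scale examples for the upper bounds) is the right one, but each of the four parts has a genuine gap. For (a), everything rests on the discrete estimate $|B|\ge\Omega\bigl((|S|/\log|S|)^{7/8}\bigr)$ of Theorem~\ref{t:discrete-squares}(a); its proof is not a generic ``higher-moment incidence estimate'' but requires pigeonholing the radii into dyadic scales, partitioning the plane into blocks of that scale, and applying the one-dimensional four-term lemma ($|A|\ge |S|^{3/8}/\sqrt2$, Lemma~\ref{l:main1D}) inside each block; the exponent $\tfrac78=\tfrac38+\tfrac12$ comes from ``$|A|^{3/8}$ lines, each carrying a segment of length comparable to the block'', and none of this appears in your sketch. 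For (b), your Baire-plus-pigeonhole route is genuinely different from the paper's, which simply replaces $B$ by $B+\bigl((\{0\}\times\Q)\cup(\Q\times\{0\})\bigr)$ --- same packing dimension, but now containing the full lines through the sides --- so that $B'=(A_1\times\R)\cup(\R\times A_2)$ and the bound $1+\tfrac38\pdim S$ follows from the product structure together with Proposition~\ref{p:dimensions-1Dcenters}. Your route could probably be completed, but its crux --- that four orthogonal $\ell$-arcs in square position over every point of $S'$ force $\ubdim B'\ge 1+\tfrac38\ubdim S'$ --- is exactly the same missing incidence estimate, now for arcs; ``reduced combinatorics of fixed-length arcs'' is an assertion, not an argument, and one still has to pass to the supporting lines and invoke the one-dimensional lemma.

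The constructions are where the proposal goes wrong rather than merely being incomplete. For (a'), if $S=A\times A$ with $A$ self-similar of dimension $s/2$, then $\pdim S=s$ and part (b) already forces $\ubdim B\ge\pdim B\ge 1+\tfrac38 s>\max(1,\tfrac78 s)$ for $0<s<2$, so no choice of radius set $R$ can save the construction; you notice this and retreat to $\pdim A=0$, but then the real work is building $B$ with $\ubdim B\le\tfrac78 s$, which requires squares whose side lengths tend to $0$ at a rate coupled to the scales of $S$ (any family of sides of length bounded below again gives $\ubdim B\ge 1+\tfrac38 s$) \emph{and} a one-dimensional set coverable by $\approx (N/R)^{3/4}$ intervals at \emph{every} intermediate scale $R$ (Lemma~\ref{l:discrete-dim}); a single ``sumset estimate for $A\pm R$'' supplies neither, and the paper's example is instead a countable union of rescaled discrete configurations accumulating at a point. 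For (b'), countable stability of $\pdim$ works against you: the packing dimension of a countable union is the supremum over the pieces, so arranging product sets at scales tending to $0$ creates no packing dimension that a single piece does not already have, and the actual content of (b') is one uncountable Cantor-type set $A$ with $\pdim A=\bdim A=\tfrac{3s}{8}$ such that for all $(x,y)\in S$ some $r$ satisfies $x\pm r,y\pm r\in A$; this is built in the paper as $\sum_k \beta_k k^{-4}D_k$ from the explicit arithmetic sets $D_k$ of Lemma~\ref{l:D_k}, and that construction --- the source of all the sharp exponents --- is absent from your proposal.
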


A related but more combinatorial problem concerns replacing the whole boundary of the square by the four vertices. We will see that, in some cases, both problems turn out to be closely related. Note that for the vertices problem, we no longer have the trivial automatic lower bound $1$, so even for Hausdorff dimension the answer a priori could, and indeed does, depend on the size of the set of centers.

\begin{theorem} \label{t:vertices}
Let $S, B\subset\R^2$ be sets such that $B$ contains the four vertices of a square with axes-parallel sides and center in every point of $S$. Then:
\begin{enumerate}
\item $\hdim B\ge \max(\hdim S-1,0)$. In particular, if $S=[0,1]^2$, then $\hdim B\ge 1$.
\item If $\dim$ is one of $\pdim$, $\ubdim$ or $\lbdim$, then $\dim B\ge \tfrac{3}{4}\dim S$. In particular, if $S=[0,1]^2$, then $\dim B\ge \tfrac{3}{2}$.
\end{enumerate}
Conversely, for each $s\in [0,2]$ there are compact sets $S, B$ as above such that:
\begin{enumerate}
\item[(a')] $\hdim S=s$ and $\hdim B=\max(s-1,0)$.
\item[(b')] $\pdim S=\bdim S=s$ and $\pdim B=\bdim B= \tfrac{3}{4}s$.
\end{enumerate}
In the case $s=2$, we can take $S=[0,1]^2$.
\end{theorem}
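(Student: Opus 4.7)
For each $(x,y)\in S$, the two diagonally opposite vertices $V_{++}=(x+r,y+r)$ and $V_{--}=(x-r,y-r)$ both lie in $B$ and both map to $x-y$ under the linear projection $\pi(a,b)=a-b$. Hence $\pi(B)\supseteq\pi(S)$. Because each fiber $\pi^{-1}(c)$ is a line, the product formula gives $\hdim \pi^{-1}(\pi(S))=\hdim\pi(S)+1$, so $\hdim S\le \hdim\pi(S)+1$. Putting these together, $\hdim B\ge\hdim\pi(B)\ge\hdim\pi(S)\ge\hdim S-1$; the case $S=[0,1]^2$ gives $\hdim B\ge 1$, and $\hdim B\ge 0$ is trivial.

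\textbf{Part (b).} The plan is to establish a discretized incidence estimate
\[
N_\delta(S)\lesssim (\log(1/\delta))^{O(1)}\, N_\delta(B)^{4/3}
\]
uniformly in $\delta$, which then translates into $\dim B\ge\tfrac34\dim S$ for each of $\pdim,\ubdim,\lbdim$ via the standard relationships between these dimensions and $\delta$-covering numbers (the packing case requiring an additional countable-decomposition argument). Fix $\delta$ and let $M=N_\delta(B)$; by dyadic pigeonholing on the radius function $r(\cdot)$, restrict to $S'\subseteq S$ with $N_\delta(S')\gtrsim N_\delta(S)/\log(1/\delta)$ and $r(s)\in[R,2R]$ for some $R>\delta$. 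Rotating coordinates by $45^\circ$, each square becomes an axis-aligned \emph{plus}-configuration: for $s=(u,v)\in S'$, the points $(u\pm\rho,v),(u,v\pm\rho)$ all lie in $B$ with $\rho=\sqrt{2}\,r\approx R$.

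The heart of the argument is an upper bound of order $M^{4/3}$ on the number of axis-aligned plus-crosses in $B$. Introduce the auxiliary set
\[
T=\{(u,v,\rho)\in\R^3: (u\pm\rho,v),(u,v\pm\rho)\in B\};
\]
its $(u,v)$-projection contains $S'$, while its $(u,\rho)$- and $(v,\rho)$-projections are respectively controlled by the ``horizontal'' and ``vertical'' energies $\sum_v|B\cap(\R\times\{v\})|^2$ and $\sum_u|B\cap(\{u\}\times\R)|^2$, each trivially bounded by $M^2$. A straightforward application of Loomis--Whitney or a single Cauchy--Schwarz yields only the weaker bound $N_\delta(S)\lesssim M^2$, and this is the main obstacle in the argument. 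The improvement to the sharp exponent $4/3$ comes from exploiting the crucial fact that the \emph{same} $\rho$ must parametrize both the horizontal and vertical pair of a given cross; this joint constraint is extracted via a coupled Cauchy--Schwarz in the $\rho$-variable combining the two energy sums, analogously to incidence estimates for axis-parallel configurations in discrete geometry.

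\textbf{Parts (a') and (b').} Both sharp examples are built from Cantor-type sets. For (a'), choose a Cantor set $F\subseteq[0,1]$ of Hausdorff dimension $\max(s-1,0)$; take $S\subseteq[0,1]^2$ of dimension $s$ whose horizontal projection lies inside $F$, and construct $B$ as a carefully chosen union of segments that places the four required vertices of each square in a compact set of dimension $\max(s-1,0)$. For (b'), the single-scale paradigm at $s=2$ is the union $B=\bigcup_{i=1}^k L_i^h\cup\bigcup_{j=1}^k L_j^v$ of $k=\lfloor\delta^{-1/2}\rfloor$ horizontal and $k$ vertical line segments in $[0,1]^2$, with $N_\delta(B)\approx\delta^{-3/2}$ and admitting a plus-cross around every point of $[0,1]^2$; iterating this in a self-similar way, with line positions at each scale drawn from a coupled Cantor set of dimension $1/2$, produces a compact $B$ with $\pdim B=\bdim B=\tfrac32$. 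Rescaling and taking products with lower-dimensional Cantor sets then cover the whole range $s\in[0,2]$.
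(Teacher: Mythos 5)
Your part (a) is correct and is essentially the paper's argument (projection along a diagonal direction; the paper's version only looks longer because it proves the stronger statement in which a single vertex in $B$ suffices, which forces a preliminary decomposition of $S$ by vertex type). The rest of the proposal has genuine gaps. In part (b) the entire content of the theorem is the discrete estimate $|S|\le O(|B|^{4/3})$, and you never prove it. You correctly note that a single Cauchy--Schwarz over the arm-length variable $\rho$ only yields $|S|\lesssim M^2$, and then assert that the sharp exponent follows from a ``coupled Cauchy--Schwarz'' exploiting that the same $\rho$ serves the horizontal and vertical pairs. That is precisely the step requiring proof, and the proposed mechanism points in the wrong direction: the equal-arm coupling is \emph{not} what produces the exponent $4/3$ --- the paper's Two-Dimensional Main Lemma never uses it, and its bound holds even if the two diagonals of each ``square'' are allowed different half-lengths. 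What is actually needed is a two-case double count over the two families of diagonal lines: writing $p_i$ for the number of points of $S$ on the $i$-th line of slope $1$, one has $|B\cap\ell_i|\ge\sqrt{p_i}$; if at least half of $|S|$ sits on lines with $p_i<|S|^{1/2}$ then $\sum_i\sqrt{p_i}\ge|S|^{3/4}/2$ directly, and otherwise at least half of $S$ sits on at most $|S|^{1/2}$ rich lines of slope $1$, so every line of slope $-1$ meets that part of $S$ in at most $|S|^{1/2}$ points and the same computation applies in the transverse direction. Without this (or an actual substitute) part (b) is unproved; the deduction of the three dimension inequalities from the discrete bound is indeed routine, as you say.

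Parts (a$'$) and (b$'$) are not proofs, and the sketch of (b$'$) contains an impossible construction. A union of $k=\lfloor\delta^{-1/2}\rfloor$ horizontal and $k$ vertical segments cannot contain a plus-cross centered at every point of $[0,1]^2$: for a center $(u,v)$ off the segments one needs $u\pm\rho$ among the $k$ vertical abscissas and $v\pm\rho$ among the $k$ horizontal ordinates, and the one-dimensional version of the very estimate you are proving forces $k\gtrsim(\delta^{-2})^{3/8}=\delta^{-3/4}$. You have conflated the vertices problem with the square-boundaries problem (where unions of lines are the right model and the exponent is $7/4$, not $3/2$). The correct discrete model is a product $D_k\times D_k$ with $D_k=\{a+bk+ck^2+dk^3: a,b,c,d\in\{-k+1,\dots,2k-2\},\ abcd=0\}$, which has $O(k^3)$ elements yet realizes all $k^8$ centers $\{0,\dots,k^4-1\}^2$; summing rescaled copies of these gives the compact example with $\pdim B=\bdim B=\tfrac34 s$. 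For (a$'$), ``a carefully chosen union of segments'' cannot work even in principle when $s<1$, since then $B$ must have Hausdorff dimension $0$; what is required is a compact set $A\subset\R$ with $\hdim A=0$ such that for all $x,y\in[0,1]$ some $r>0$ has $x\pm r,\ y\pm r\in A$ (a Davies--Marstrand--Taylor-type intersection construction), combined with a splicing argument that interleaves the scales of $A$ with those of the factors $C,D$ of $S$ so that $\hdim(A\times C)=\hdim(A\times D)=s-1$ while $\hdim(C\times D)=s$. None of this machinery is present in the proposal.
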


For most of these problems, it makes sense to consider also squares with sides pointing in arbitrary directions.
Altough we do not know much in this setting, we have the following proposition that (together with the first part of Theorem~\ref{t:vertices}) shows that the answer can be different if we allow this additional degree of freedom.

\begin{prop} \label{p:2D-hausdorff-rotated}
There exists a closed set $B\su\R^2$ of Hausdorff dimension zero that contains the
vertices of at least one (possibly rotated) square around each point of $\R^2$.
\end{prop}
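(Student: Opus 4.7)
My plan is to combine a compactness reduction with an explicit multiscale construction of a Cantor-type set in $\R^2$.

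\smallskip
\noindent\emph{Step 1 (Reduction to a countable dense set of centers).} Let $\{q_n\}_{n\ge 1}$ be an enumeration of a countable dense subset of $\R^2$. I claim it is enough to produce a closed $B\subset \R^2$ with $\hdim B=0$ such that for every $n$ there is a square centered at $q_n$, with half-diagonal $\rho_n\in [1,2]$ and rotation angle $\theta_n\in[0,\pi/2)$, whose four vertices lie in $B$. Indeed, given $c\in\R^2$, pick $q_{n_k}\to c$ and extract a further subsequence with $\rho_{n_k}\to\rho\in[1,2]$ and $\theta_{n_k}\to\theta$; the four vertices of the squares then converge to the vertices of a non-degenerate square centered at $c$, and these limit points lie in $B$ since $B$ is closed. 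This is the analogue, for the vertex problem, of the compactness observation already used in the axes-parallel setting.

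\smallskip
\noindent\emph{Step 2 (Multiscale construction of $B$).} I construct $B$ as the closure of the union of four-vertex sets of squares chosen inductively, one per $q_n$. The angle $\theta_n$ and half-diagonal $\rho_n$ are selected so that each of the four new vertices of the $n$-th square lies within distance $\delta_n$ of some previously chosen vertex, where $\delta_n$ decreases very rapidly (say $\delta_n=2^{-n!}$). With this prescription, at scale $n$ the set $\bigcup_{k\le n} V_k$ is contained in $4^n$ balls of radius $\sum_{k\ge n}\delta_k$ around a fixed finite seed, so a direct Hausdorff-measure estimate gives $\iH^s(B)=0$ for every $s>0$, hence $\hdim B=0$. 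The closedness of $B$ is automatic, and by Step~1 this $B$ contains vertices of a (possibly rotated) square around every point of $\R^2$.

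\smallskip
\noindent\emph{Main obstacle.} The delicate part is verifying that the greedy choice in Step~2 can actually be made at every stage: given a finite set $B_{n-1}$ and a new center $q_n$, we need a square around $q_n$ whose four vertices are simultaneously $\delta_n$-close to $B_{n-1}$. This is precisely where the rotational freedom is crucial and the construction genuinely uses more than what is available in Theorem~\ref{t:Hausdorff}: the pair $(\rho_n,\theta_n)$ provides a two-parameter family of quadruples of vertex positions around $q_n$ (rather than a one-parameter family lying on a pair of perpendicular axes through $q_n$), which is what allows the four vertices to be aligned with points already present in the scaffold. To make this always feasible I choose the initial ``seed'' $B_0$ to be a Cantor-type finite configuration already containing approximate squares (at half-diagonal close to a value in $[1,2]$) around a sufficiently dense net of centers, and I arrange at every inductive step that $B_n$ refines this property to a finer net. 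This multiscale refinement, rather than the final dimension computation, carries the real content of the proof.
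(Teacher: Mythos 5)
There is a genuine gap, and it is exactly at the place you flag as the ``main obstacle'': the inductive step of Step~2 cannot be carried out, and in fact the dimension bookkeeping you propose makes the whole scheme provably impossible. Note first that the center of a square is the midpoint of a pair of opposite vertices, so the set of points that are centers of squares with all four vertices in $B$ is contained in $\tfrac12(B+B)$. Your covering estimate in Step~2 (finitely many, or even $4^n$, balls of radius $\sum_{k\ge n}\delta_k$ with $\delta_n=2^{-n!}$) gives not just $\hdim B=0$ but $\ubdim (B\cap K)=0$ for every compact $K$; then $\tfrac12(B+B)$ has upper box dimension $0$ locally and cannot contain a dense subset of any open set, so Step~1 can never be completed. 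Quantitatively, the same obstruction kills the greedy step: a finite scaffold $B_{n-1}$ with $M$ points can only serve centers lying within $\delta_n$ of one of its at most $M^2$ midpoints of pairs, a set of planar measure $O(M^2\delta_n^2)$. With $M=O(n)$ and $\delta_n=2^{-n!}$, a center $q_n$ fixed in advance from a dense enumeration will essentially never lie in this set, and no choice of $(\rho_n,\theta_n)$ can make even \emph{two opposite} vertices $\delta_n$-close to $B_{n-1}$, let alone all four. Enlarging the seed $B_0$ only postpones the problem: to serve a $\delta$-net of centers in $[0,1]^2$ one needs on the order of $\delta^{-2}$ midpoints, hence on the order of $\delta^{-1}$ points in the scaffold at tolerance $\delta$, so the cardinality must grow unboundedly with the scale, which is incompatible with your ``all new vertices stay $\delta_n$-close to a fixed finite configuration'' covering argument. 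Your final sentence (``choose a seed already containing approximate squares around a sufficiently dense net and refine'') is a restatement of the proposition, not a proof of it.

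The paper's proof avoids both difficulties at once. It takes $B$ to be the planar set of Proposition~\ref{p:intersecting-zero-dim-set} (a Davies--Marstrand--Taylor-type construction: at stage $i$ it is a union of balls of radius $\delta_i$ centered on a lattice of spacing $\eps_i$, with $\log\eps_i/\log\delta_i\to 0$, which has Hausdorff dimension $0$ but full upper box dimension --- consistently with the obstruction above). For any center $c$, the four rotations of $B$ about $c$ by $0^\circ,90^\circ,180^\circ,270^\circ$ are affine images of $B$, so their intersection is nonempty, and it cannot be the singleton $\{c\}$ since a further intersection with a translate of $B$ must still be nonempty. Any point $v\neq c$ of this fourfold intersection, together with its three rotations about $c$, gives the four vertices of a (rotated) square in $B$ centered at $c$. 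The simultaneity of the four vertices, which your construction has no mechanism to enforce, is obtained here for free from a single point of a fourfold intersection. If you want a construction in the spirit of your Step~2, you would need the scaffold at scale $\delta_n$ to consist of roughly $\eps_n^{-2}$ balls per unit area with $\delta_n\ll\eps_n$ and $\log\eps_n/\log\delta_n\to 0$, i.e.\ essentially to reprove Proposition~\ref{p:intersecting-zero-dim-set}.
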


There are natural discrete versions of the above problems (in which the sets are finite and dimension is replaced by cardinality). We start with the version for vertices; this is the most combinatorial of the results in this paper (it has an additive combinatoric flavor, although we do not know of any connection with established results in this area). Furthermore, it is key to many of the proofs of the estimates for packing and box counting dimensions stated above, not just for problems involving vertices but also when the whole square boundaries are considered.

\begin{theorem} \label{t:discrete-vertices}
\begin{enumerate}
\item Let $B\subset\mathbb{R}^2$ be a finite set, and let
\[
S=\{(x,y)\in\mathbb{R}^2~|~\exists r~(x-r,~y-r),~(x+r,~y-r),~ (x-r,~y+r),~(x+r,y+r)\in B\}.
\]
Then $|S|\leq (2 |B|)^{\frac{4}{3}}$.
\item For some universal constant $C>0$ one can find sets $B, S$ as above, with $|S|$ of arbitrary cardinality, such that $|S|\geq C\,|B|^{\frac{4}{3}}$.
\end{enumerate}
\end{theorem}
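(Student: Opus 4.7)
The plan is to reduce the problem to a symmetric ``plus-shape'' counting problem via a $45^\circ$ coordinate rotation and then apply a combinatorial/incidence argument. Under the rotation $(x,y)\mapsto(x+y,x-y)$, writing $B'$ for the rotated image of $B$, the four vertices $(x\pm r,y\pm r)$ of an axis-parallel square are mapped to the four axis vertices $(u\pm\rho,v),(u,v\pm\rho)$ of a plus-shape centred at $(u,v)=(x+y,x-y)$, with $\rho=2r$. Thus it suffices to show that, for any finite $B'\subset\mathbb{R}^2$, the set $S'$ of centres $(u,v)$ for which $(u\pm\rho,v),(u,v\pm\rho)\in B'$ for some $\rho>0$ satisfies $|S'|\le(2|B'|)^{4/3}$.

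The key observation is that each $s\in S'$ is simultaneously the midpoint of a horizontal pair in $B'$ and of a vertical pair in $B'$, with the two pairs sharing a common half-length $\rho$. Letting $\mathcal{T}_H,\mathcal{T}_V$ denote the sets of (midpoint, half-length) triples produced by horizontal and vertical pairs in $B'$, we have $|S'|\le|\mathcal{T}_H\cap\mathcal{T}_V|$. A naive Cauchy--Schwarz or min-bound yields only $|S'|\le|B'|^2/2$, far from the claim.

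To reach the exponent $4/3$ I would apply a Szemer\'edi--Trotter-type incidence bound to a balanced point-curve system extracted from this configuration. The main obstacle is constructing such a system cleanly: the obvious parameterisations produce either too many curves or curves of too high degree for the standard theorem to apply usefully. A plausible route uses the extra structure coming from the original (pre-rotation) horizontal and vertical lines of $B$, i.e.\ exploiting simultaneously the four classes of pairs in $B'$ with slopes $\{0,\infty,+1,-1\}$ together with the fact that every unordered pair of distinct points of $B'$ lies on at most one of these four line classes, so $E_H+E_V+E_{+1}+E_{-1}\le\binom{|B'|}{2}$. Combining this joint constraint with an incidence argument should bridge the gap from $|B'|^2$ down to $|B'|^{4/3}$, and careful book-keeping should produce the constant $2^{4/3}$.

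For part~(2), a Cartesian grid $\{1,\ldots,k\}^2$ only yields $|S|=\Theta(|B|)$, since many axis-parallel squares share the same centre. The construction must therefore arrange for most valid squares to have distinct centres. A natural candidate is a union of arithmetic progressions along several directions, designed so that the radii witnessing distinct centres are nearly all distinct; a direct combinatorial count should then give $|S|=\Omega(|B|^{4/3})$.
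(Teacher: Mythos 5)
Both parts of your proposal stop exactly where the actual work begins, so there is a genuine gap in each. For part (1), your $45^\circ$ rotation and the observation that each centre is a common midpoint of a horizontal pair and a vertical pair of $B'$ with equal half-length is a correct (and essentially cosmetic) reformulation of the setup, but the step from the trivial $|B'|^2$ bound to $|B'|^{4/3}$ is precisely the content of the theorem, and you explicitly concede you do not know how to set up the incidence system. No Szemer\'edi--Trotter machinery is needed, and the constraint $E_H+E_V+E_{+1}+E_{-1}\le\binom{|B'|}{2}$ you propose to exploit is too weak on its own. The paper's argument is elementary: for each line $\ell$ of gradient $\pm1$ (horizontal/vertical in your rotated picture), every centre on $\ell$ is the midpoint of a pair of points of $B\cap\ell$, so $|S\cap\ell|\le\binom{|B\cap\ell|}{2}$ and hence $|B\cap\ell|\ge|S\cap\ell|^{1/2}$. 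One then splits the gradient-$1$ lines into ``light'' ones (fewer than $\sqrt{|S|}$ centres) and ``heavy'' ones (at least $\sqrt{|S|}$ centres, of which there are at most $\sqrt{|S|}$). If the light lines carry at least half of $S$, then $|B|\ge\sum_i\sqrt{a_i}\ge\sum_i a_i/|S|^{1/4}\ge\tfrac12|S|^{3/4}$. Otherwise the points on heavy lines are redistributed among the gradient-$(-1)$ lines, each of which meets each heavy line at most once and so carries at most $\min(q_j,\sqrt{|S|})$ such points; the same computation on the gradient-$(-1)$ lines again gives $|B|\ge\tfrac12|S|^{3/4}$. This bilinear ``heavy lines in one direction are few, so they spread thinly over lines in the other direction'' step is the missing idea.

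For part (2), you correctly rule out the grid, but ``a union of arithmetic progressions along several directions'' with ``a direct combinatorial count'' is not a construction. The paper takes $D_k=\{a+bk+ck^2+dk^3 : a,b,c,d\in\{-k+1,\dots,2k-2\},\ abcd=0\}$, which has $|D_k|=O(k^3)$, and sets $B=D_k\times D_k$, $S=\{1,\dots,k^4-1\}^2$. Writing $x=x_0+x_1k+x_2k^2+x_3k^3$ and $y=y_0+y_1k+y_2k^2+y_3k^3$ in base $k$ and choosing $r=x_0-x_1k+y_2k^2-y_3k^3$, each of $x\pm r$, $y\pm r$ has a representation $a+bk+ck^2+dk^3$ with all coefficients in $\{-k+1,\dots,2k-2\}$ and at least one coefficient zero, so all four vertices lie in $B$. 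This gives $|S|=k^8\ge\Omega(|B|^{4/3})$. The signed-digit trick that kills one coordinate of each vertex is the essential point, and it is absent from your plan.
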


In the discrete problem for square boundaries, we consider finite subsets of $\Z^2$, and the intersections of the square boundaries with $\Z^2$.

\begin{theorem} \label{t:discrete-squares}
\begin{enumerate}
\item Let $B\subset \mathbb{Z}^2$ a finite set, and let $S$ be the set of points that are centers of (discrete) square boundaries contained in $B$. Then  $|B|\geq \Omega\left((|S|/\log|S|)^{\frac78}\right)$.

\item Conversely, there exist $S, B$ as above, with $|S|$ of arbitrary cardinality, such that $|B|\leq O(|S|^{\frac{7}{8}})$.
\end{enumerate}
\end{theorem}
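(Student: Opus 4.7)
Proof plan.

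The theorem is the discrete counterpart of Theorem~\ref{t:packingbox-all-centers}, so both parts should be proved by a combinatorial implementation of the arguments behind the box/packing dimension estimates. I will treat the lower bound first.

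For (1), I first pigeonhole on dyadic scales of radius. For each $(x,y)\in S$, fix a witnessing radius $r(x,y)$ with $1\le r(x,y)\le |B|$ (at least $1$ because the discrete boundary is non-empty, at most $|B|$ because the boundary contributes $\ge 8r$ lattice points, all of which lie in $B$). Since there are $O(\log|S|)$ dyadic scales, one scale $[R,2R)$ supports a subset $S_R$ with $|S_R|\gtrsim|S|/\log|S|$. The problem reduces to the per-scale estimate $|S_R|\le O(|B|^{8/7})$. Simply applying Theorem~\ref{t:discrete-vertices} to the four corners of each square only gives $|S_R|\le (2|B|)^{4/3}$, which is weaker than needed. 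The stronger bound $|B|^{8/7}$ should be obtained by additionally exploiting that each square at scale $R$ contributes two horizontal and two vertical arithmetic progressions of length $\Theta(R)$ to $B$. Parametrizing each centre by a compatible quadruple of such ``segment configurations'' (two horizontal ones at heights $y\pm r$ sharing an $x$-centre with vertical offset $2r$, and two vertical ones at $x$-positions $x\pm r$ sharing a $y$-centre with horizontal offset $2r$, whose endpoints are the four corners) and applying a Szemer\'edi--Trotter / Cauchy--Schwarz averaging converts the trivial bounds $|H_R|,|V_R|\le|B|$ on the number of such configurations into the desired tighter count. Summing over the $O(\log|S|)$ dyadic scales yields $|S|\le O(|B|^{8/7}\log|S|)$, which rearranges to the claimed estimate.

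For (2), the plan is to adapt the sharpness construction behind Theorem~\ref{t:packingbox-all-centers} to a purely combinatorial setting. For large $N$, aim for $|S|=\Theta(N^2)$ and $|B|=O(N^{7/4})$, so that $|B|\le O(|S|^{7/8})$. A natural template is to place centres on $\{1,\ldots,N\}^2$ and, for a positive-density subset of them, to choose radii from a small additively structured set $A\subset\{1,\ldots,N\}$ of cardinality $\sim N^{3/4}$. Building $B$ as the union of horizontal and vertical segments whose lengths and positions are controlled by $A$ lets the $N^2$ centres share a large fraction of their boundary points; one then checks $|B|=O(N^{7/4})$ by a direct count and $|S|=\Omega(N^2)$ by verifying that, for each selected centre, the four sides of the chosen radius really sit inside $B$.

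The main obstacle is the passage from $4/3$ to $8/7$ in the per-scale estimate: the vertices bound only sees four points per square, whereas the additional $|B|^{4/3-8/7}=|B|^{4/21}$ saving genuinely requires using the $\Theta(R)$ lattice points contributed by each side. Identifying the right incidence / additive-combinatorial lemma that captures this rigidity, and verifying that it interacts cleanly with the dyadic pigeonhole (without leaking an extra factor of $R$), is the delicate technical step.
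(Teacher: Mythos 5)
Your outline gets the first step of part (1) right (dyadic pigeonholing on the radius, reducing to a single scale $S_j$ with $|S_j|\gtrsim|S|/\log|S|$), and you correctly diagnose that the Two-Dimensional Main Lemma applied to the four corners is too weak. But the per-scale bound $|B|\ge\Omega(|S_j|^{7/8})$ is exactly where your argument stops: the ``Szemer\'edi--Trotter / Cauchy--Schwarz on segment configurations'' step is not carried out, and it is not the mechanism that actually works. The paper's argument is more elementary and goes as follows. After fixing the scale $2^j$, partition $\Z^2$ into boxes $R$ of side $2^j$ (comparable to the radii). Within one box, let $A_R$ be the set of $x$- and $y$-coordinates of the \emph{lines} containing the sides of the squares centred in $R$; every centre $(x,y)\in R\cap S_j$ satisfies $x\pm r,\,y\pm r\in A_R$, so the \emph{One-Dimensional} Main Lemma (Lemma~\ref{l:main1D}, not the 2D vertices bound) gives $|A_R|\ge\Omega(|R\cap S_j|^{3/8})$. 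Each element of $A_R$ contributes a full horizontal or vertical segment of $\ge 2^{j-1}$ lattice points to $B$, and these segments are disjoint in at least one of the two directions, so $|B\cap(\text{nbhd of }R)|\ge\Omega(2^j\,|R\cap S_j|^{3/8})$. Since $|R\cap S_j|\le 2^{2j}$, i.e.\ $2^j\ge|R\cap S_j|^{1/2}$, this is $\ge\Omega(|R\cap S_j|^{7/8})$; a bounded-overlap count (each lattice point lies in $O(1)$ of the neighbourhoods) lets you sum over the boxes and a dyadic decomposition in $|R\cap S_j|$ yields $|B_j|\ge\Omega(|S_j|^{7/8})$. The exponent $7/8=1/2+3/8$ thus comes from combining the $3/8$ of the 1D lemma with the length of the sides, not from an incidence bound; without this localization-plus-segment-length argument your part (1) is incomplete.

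Part (2) also has a gap at its crux. You posit ``a small additively structured set $A\subset\{1,\ldots,N\}$ of cardinality $\sim N^{3/4}$'' such that every centre in $\{1,\ldots,N\}^2$ admits a radius $r$ with $x\pm r,\,y\pm r\in A$ (note: it is the side coordinates, not the radii, that must lie in $A$), but you do not construct it, and its existence is precisely the hard content here. The paper builds it explicitly (Lemma~\ref{l:D_k}): with $N=k^4$, take $D_k=\{a+bk+ck^2+dk^3:\ a,b,c,d\in\{-k+1,\ldots,2k-2\},\ abcd=0\}$, of size $O(k^3)=O(N^{3/4})$; writing $x,y$ in base $k$ and choosing $r=x_0-x_1k+y_2k^2-y_3k^3$ puts all four of $x\pm r,y\pm r$ in $D_k$. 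Then $B=(D_k\times I)\cup(I\times D_k)$ with $I=\{-k^4,\ldots,2k^4\}$ has $|B|=O(k^7)=O(|S|^{7/8})$. Your template is the right shape, but without producing such a set the construction is only a plan.
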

We believe it should be possible to eliminate the $\log |S|$ term in the above theorem.

We summarize our results in the following table. In all cases $s$ is the size of the set of centers $S$, and the table gives the smallest possible size of  sets $B$ that contain the vertices of a square/the boundary of a square with centers in every point of $S$. In the case of cardinality, these numbers should be understood as the logarithm of the cardinality of the corresponding sets, up to smaller order factors.

\medskip

\begin{center}
\begin{tabular}{|c|c|c|}
 \hline
 Notion of size & vertices problem & boundaries problem \\
 \hline
 $\hdim$ & $\max(s-1,0)$ & $1$ \\
 $\pdim$ & $\frac{3}{4}s$ & $1+\frac{3}{8}s$ \\
 $\ubdim$ & $\frac{3}{4}s$ & $\max(1, \frac{7}{8}s)$ \\
 $\lbdim$ & $\frac{3}{4}s$ & $\max(1, \frac{7}{8}s)$ \\
 $|\cdot|$ & $\frac{3}{4}s$ & $\frac{7}{8}s$\\
 \hline
\end{tabular}
\end{center}

\subsection{Structure of the paper}

The rest of the paper is organized as follows. In Section \ref{sec:preliminaries} we recall  properties of dimension that will be required in the sequel. The main results concerning Hausdorff dimension are proved in Section \ref{sec:Hausdorff}. The discrete results, Theorems \ref{t:discrete-vertices} and \ref{t:discrete-squares}, are proved in Section \ref{sec:discrete}. The lower bounds for the packing and box counting dimensions of the set $B$ are proved in Section \ref{sec:estimates}, while the examples showing their sharpness are constructed in Section \ref{sec:constructions}. We conclude with some remarks on possible directions of future research in Section \ref{sec:remarks}.

For the most part, sections can be read independently, except for Section \ref{sec:estimates} which depends on Section \ref{sec:discrete}. In particular, the Hausdorff dimension results in Section \ref{sec:Hausdorff} are independent from the rest of the paper.

\section{Preliminaries} \label{sec:preliminaries}

We recall some basic facts on dimension; in the later sections we will call upon these without further reference.
For any set $A\subset\R^d$ one has the chains of inequalities
\begin{align*}
\hdim A & \le  \lbdim A  \le \ubdim A,\\
\hdim A & \le \pdim A  \le \ubdim A.
\end{align*}
The lower box counting and packing dimensions are not comparable.

We will often have to deal with dimensions of product sets. The following inequalities hold but can be strict:
\begin{align*}
\hdim(A)+\hdim(B) &\le \hdim(A\times B) \le \hdim(A)+\pdim(B),\\
\pdim(A\times B) &\le \pdim(A)+\pdim(B),\\
\ubdim(A\times B) &\le \ubdim(A)+\ubdim(B).
\end{align*}
See e.g. \cite[Theorem 8.10]{Mattila95} and \cite[Product Formula 7.5]{Falconer03} for the proofs.

\section{Hausdorff dimension results} \label{sec:Hausdorff}

\subsection{Small sets with large intersection, and the proofs of Theorem~ \ref{t:Hausdorff} and  Proposition \ref{p:2D-hausdorff-rotated}}

In \cite{DMT60}, Davies, Marstrand and Taylor construct a closed set $A\subset\R$ of Hausdorff dimension zero with the property that for any finite family $(f_i)_{i=1}^m$ of invertible affine maps on $\R$, the intersection $\cap_{i=1}^m f_i(A)$ is nonempty. We need to adapt their construction to suit our needs; in particular, we will make use of the analog result in the plane, and of a version in which the maps $f_i$ are taken from a fixed compact set. In the proof of Theorem \ref{t:vertices}(a') we will also need a more flexible variant of the construction.

\begin{prop} \label{p:intersecting-zero-dim-set}
For any dimension $d\ge 1$ there exists a closed set $A\subset\R^d$ of zero Hausdorff dimension, such that for any finite family of invertible affine maps $(f_i)_{i=1}^m$ on $\R^d$, the intersection $\cap_{i=1}^m f_i(A)$ is nonempty.

If the maps $f_i$ are constrained to lie in a fixed compact set of invertible affine maps, then $A$ can be taken to be compact.
\end{prop}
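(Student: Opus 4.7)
The plan is to adapt the explicit construction from Davies--Marstrand--Taylor to arbitrary dimension $d$, rather than relying on Baire category (which gives only a $G_\delta$). I would prove the compact version of the proposition first and then derive the unbounded version from it by dovetailing.

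Enumerate a countable dense family $\{T_k\}_{k\ge 1}$ of finite tuples of invertible affine maps with rational coefficients. Build a nested decreasing sequence of compact sets $A_0\supset A_1\supset\cdots$, each a finite union of closed balls of radius $\eps_k$ centered at rational points, and set $A=\bigcap_k A_k$. The number of balls $N_k$ will grow at most polynomially in $k$, while $\eps_k$ is allowed to decay (tower-exponentially) fast enough that $N_k\eps_k^s\to 0$ for every $s>0$, which forces $\hdim A=0$.

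The heart of the construction is the inductive step. At step $k$, $A_{k-1}$ is designed to contain a fine rational lattice $\Lambda_{k-1}=Q_{k-1}^{-1}\Z^d\cap B(0,R)$, where $Q_{k-1}$ is a carefully chosen integer clearing the denominators of the coefficients of all earlier tuples $T_1,\ldots,T_{k-1}$ together with their inverses. I would pick a witness point $y_k\in\Lambda_k$ (where $Q_k$ further accumulates the denominators coming from $T_k$) so that the rational invariance of the refined lattice automatically yields $(f_i^{(k)})^{-1}(y_k)\in\Lambda_k$ for every $i$. Then $A_k$ is the union of $\eps_k$-balls around the new witness $y_k$ and its preimages $(f_i^{(k)})^{-1}(y_k)$, around all earlier witnesses and their preimages, and around a fresh lattice preparation $\Lambda_k$ for future steps. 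The main obstacle is exactly the tension at this step between \emph{richness} of $A_{k-1}$ (enough lattice structure to accommodate the witness for the next tuple) and \emph{sparsity} (few enough balls to keep $\hdim=0$); this is resolved because each step introduces only finitely many new points, whereas $\eps_k$ may be made to shrink arbitrarily fast.

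Having set $A=\bigcap_k A_k$, the dense enumeration yields the intersection property for every rational tuple. For a general tuple $(f_i)$ lying in a compact class $K$ of invertible affine maps, approximate by rational tuples $T_{k(n)}\to (f_i)$; the corresponding witnesses $y_{k(n)}$ lie in the compact $A$, so a subsequence converges to some $y^*\in A$. By closedness of $A$, nested inclusion of the $A_k$, and uniform continuity of $f\mapsto f^{-1}$ on $K$, the limiting preimages $f_i^{-1}(y^*)$ remain in $A$, giving $y^*\in\bigcap_i f_i(A)$. This establishes the compact case.

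For the first assertion (arbitrary affine maps, $A$ closed but not necessarily compact), apply the compact construction to an exhausting sequence $K_1\subset K_2\subset\cdots$ of compact classes of invertible affine maps (for instance, $K_n$ consisting of maps $f$ with $\|f\|,\|f^{-1}\|,|f(0)|\le n$). For each $n$, after conjugating $K_n$ by a suitable translation, the compact version provides a compact set $B_n\su\R^d$ of Hausdorff dimension zero lying in a prescribed annulus $\{R_n\le|x|\le R_n+1\}$, which handles the conjugated class. Taking $R_n\to\infty$ sufficiently fast that the annuli are pairwise disjoint, the union $A=\bigcup_n B_n$ is closed because each bounded ball meets only finitely many $B_n$, has $\hdim=0$ as a countable union of closed sets of dimension zero, and every finite tuple of invertible affine maps belongs (after conjugation tracking) to some $K_n$, so the intersection property is inherited from the compact case.
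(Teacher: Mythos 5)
Your high-level architecture (an explicit Cantor-type construction handling a countable dense family of rational tuples, followed by a limiting argument inside a compact class) is reasonable and broadly in the spirit of the paper, but the inductive step that carries all the weight does not work as described. The ``rational invariance'' device only guarantees that the preimages $(f_i^{(k)})^{-1}(y_k)$ of a lattice point lie in a \emph{refined} lattice $Q_k^{-1}\Z^d$; it says nothing about their lying in $A_{k-1}$, which is a union of tiny balls occupying a vanishingly small portion of any lattice. Moreover, as written your stages are not even nested: you include $\eps_k$-balls around the whole fresh lattice $\Lambda_k$, most of whose points fall in the gaps of $A_{k-1}$, so $A_k\not\subset A_{k-1}$; consequently the witness and preimage points introduced at step $k$ will generically fail to belong to $A_j$ for $j<k$, hence fail to belong to $A=\bigcap_k A_k$, and the intersection property is lost. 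If instead you restrict $\Lambda_k$ to lie inside $A_{k-1}$ to restore nestedness, then you must explain why a witness $y_k$ exists all of whose preimages under the \emph{arbitrary} rational maps of $T_k$ (possibly with huge translations or strong contractions) land inside that extremely sparse set --- and no such argument is given. Your stated resolution of the ``richness vs.\ sparsity'' tension (finitely many new points per step, $\eps_k$ shrinking fast) only addresses the dimension-zero side, not the richness side. This is exactly the difficulty the paper's proof is engineered to overcome: the scale-matching condition $\delta_i\le\eps_i\le\delta_{i-1}/(2\sqrt{d}+2)$ makes the new lattice dense at the scale of the previous balls, the constraints are distributed one map per scale via the sets $F_{(2j-1)2^{p-1}}$ and $K_p$, and arbitrary maps are first normalized to near-identity expanding maps in the open set $\mathcal{U}$ by composing with a dense family $(g_p)$ and taking $\bigcup_p g_p(K_p\cap B_M)$. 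No substitute for these mechanisms appears in your proposal.

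Two secondary points. First, your reduction of the closed (non-compact) case to the compact case via disjoint annuli has a circularity: conjugating the class $K_n$ by a translation $v_n$ changes the class (the translation parts grow like $\|S-\mathrm{I}\|\,|v_n|$), so the compact set built for the conjugated class, and hence the radius $R_n$ you need, depends on $v_n$, which you wanted to choose after knowing that radius; this is probably patchable but needs an argument (the paper avoids it by proving both cases from one construction, using the balls $B_M$, $B'_M$ and \eqref{e:union-disjoint-from-ball}). Second, the claim that the number of balls $N_k$ grows polynomially in $k$ is inconsistent with including full lattices of mesh $Q_k^{-1}$ with rapidly accumulating denominators; this does not by itself endanger $\hdim A=0$ (you may shrink $\eps_k$ after seeing $N_k$), but it signals that the bookkeeping in the construction has not been thought through.
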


The proofs of Theorem \ref{t:Hausdorff} and Proposition \ref{p:2D-hausdorff-rotated} assuming this proposition are rather short. In order not to interrupt the flow of ideas, and because the proof of Proposition \ref{p:intersecting-zero-dim-set} will be needed later in the proof of Proposition \ref{p:existenceACD}, we defer its proof to Section \ref{subsec:proof-DMT}.

\begin{lemma}\label{l:easy}
For any $A\su\R$ the following two statements are equivalent.

(i)  For any $x,y\in\R$ there exists $r>0$ such that $x-r,x+r,y-r,y+r\in A$.

(ii)  For any $x,y\in\R$, $A_{x,y}\not\subset\{0\}$, where
$A_{x,y}=(A-x)\cap (x-A) \cap (A-y) \cap (y-A)$.

In particular, any set obtained from Lemma~\ref{p:intersecting-zero-dim-set}
satisfies (i) and (ii).
\end{lemma}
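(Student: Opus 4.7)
The plan is to first establish the equivalence (i)$\iff$(ii) by unwinding the definitions, and then verify (ii) for the sets produced by Proposition~\ref{p:intersecting-zero-dim-set} via a small trick: adding one extra affine map to the family.

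First I would observe that, directly from the definition, $r\in A_{x,y}$ is equivalent to all four of $x+r,\ x-r,\ y+r,\ y-r$ lying in $A$, and this condition is manifestly invariant under $r\mapsto -r$. Hence $A_{x,y}$ is symmetric about $0$. Since (i) asserts the existence of some \emph{positive} such $r$ while (ii) asserts the existence of some \emph{nonzero} such $r$, this symmetry makes them equivalent: any nonzero witness for (ii) can be replaced by its absolute value to give a positive witness for (i), and the reverse direction is immediate.

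For the ``in particular'' claim, fix $x,y\in\R$ and consider the four invertible affine maps $f_1(t)=t-x$, $f_2(t)=x-t$, $f_3(t)=t-y$, $f_4(t)=y-t$, so that $A_{x,y}=\bigcap_{i=1}^{4}f_i(A)$. Applying Proposition~\ref{p:intersecting-zero-dim-set} directly to these four maps yields only nonemptiness, which is insufficient, since $0$ could a priori be the unique element (this happens precisely when $x,y\in A$). The key move is to append a fifth invertible affine map $f_5$ whose image avoids $0$: since $A$ has zero Hausdorff dimension, $A\ne\R$, so we can pick any $c\in\R\setminus A$ and set $f_5(t)=t-c$; then $0\in f_5(A)$ would force $c\in A$, a contradiction. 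Applying Proposition~\ref{p:intersecting-zero-dim-set} to the five-map family $f_1,\dots,f_5$ then produces a nonempty intersection which, by construction, lies in $A_{x,y}\setminus\{0\}$, giving (ii).

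The proof is essentially a careful bookkeeping of definitions; the only mildly creative step is the introduction of $f_5$, which exploits the sparseness of $A$ to excise the trivial element $0$ from the intersection. I do not anticipate any significant obstacle.
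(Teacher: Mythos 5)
Your proof is correct. The equivalence argument (symmetry of $A_{x,y}$ under $r\mapsto -r$, so ``nonzero witness'' and ``positive witness'' coincide) is exactly what the paper leaves as ``clear.'' For the ``in particular'' part you and the paper share the same underlying idea --- enlarge the family of affine maps so that the guaranteed nonempty intersection cannot reduce to $\{0\}$ --- but implement it differently. The paper intersects $A_{x,y}$ with $1-A_{x,y}$, i.e.\ applies Proposition~\ref{p:intersecting-zero-dim-set} to the eight maps obtained from the original four and their compositions with $t\mapsto 1-t$; a point $t$ of this intersection has $1-t\in A_{x,y}$ as well, and $t$, $1-t$ cannot both vanish. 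You instead append a single fifth map $f_5(t)=t-c$ with $c\notin A$, so the five-fold intersection avoids $0$ outright. Your version uses fewer maps but needs the (trivially available) fact that $A\neq\R$; the paper's is self-contained in that it uses nothing about $A$ beyond the intersection property. One caveat worth keeping in mind if you reuse this lemma in the proof of Proposition~\ref{p:existenceACD}: there $A$ is built only for maps in a fixed compact family $\mathcal{C}$ of maps $\pm x+b$, $b\in[-2,2]$, so your $f_5$ must be chosen with $c\in[-2,2]\setminus A$ --- which is possible since $\hdim A=0$, but should be said. (Also, your parenthetical ``this happens precisely when $x,y\in A$'' characterizes when $0\in A_{x,y}$, not when $0$ is the \emph{unique} element; this is harmless but slightly misstated.)
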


\begin{proof}
The equivalence of (i) and (ii) is clear. Let $A$ be a set obtained from Lemma~\ref{p:intersecting-zero-dim-set}.
Then $A_{x,y}\cap (1-A_{x,y})$ is nonempty, so $A_{x,y}$ contains a positive element, and we are done.
\end{proof}

\begin{prop}\label{p:1D-hausdorff}
There exists a closed set $A\su\R$ of zero Hausdorff dimension, such that for any $x,y\in\R$ there exists $r>0$ such that $x-r,x+r,y-r,y+r\in A$.
\end{prop}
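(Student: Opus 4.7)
The plan is to derive Proposition~\ref{p:1D-hausdorff} directly from Proposition~\ref{p:intersecting-zero-dim-set} (in the case $d=1$) together with Lemma~\ref{l:easy}. Specifically, I would let $A\subset\R$ be the closed, zero Hausdorff dimension set produced by Proposition~\ref{p:intersecting-zero-dim-set} with $d=1$; then the result is essentially already encoded in Lemma~\ref{l:easy}(i), so the bulk of the work is really hidden in Proposition~\ref{p:intersecting-zero-dim-set}, whose proof the authors defer to Section~\ref{subsec:proof-DMT}.

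To make the connection explicit, I would observe that for fixed $x,y\in\R$ the four sets
\[
A-x,\quad x-A,\quad A-y,\quad y-A
\]
are images of $A$ under invertible affine self-maps of $\R$, so Proposition~\ref{p:intersecting-zero-dim-set} guarantees that $A_{x,y}=(A-x)\cap(x-A)\cap(A-y)\cap(y-A)$ is nonempty. This alone, however, only gives some $r\in A_{x,y}$, which could a priori be $0$; to rule this out I would apply Proposition~\ref{p:intersecting-zero-dim-set} to the \emph{five} maps $\id_{A-x},\id_{x-A},\id_{A-y},\id_{y-A}$ and $a\mapsto 1-a$ composed appropriately, exactly as in the proof of Lemma~\ref{l:easy}: the intersection $A_{x,y}\cap(1-A_{x,y})$ being nonempty forces some $r\in A_{x,y}$ with $r\neq 0$. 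Since the set $A_{x,y}$ is symmetric about $0$ (swapping $r\leftrightarrow -r$ permutes the four defining conditions), the existence of a nonzero element immediately yields a \emph{positive} element $r\in A_{x,y}$, and by the definition of $A_{x,y}$ this is exactly the $r>0$ with $x\pm r, y\pm r\in A$ required by the statement.

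The main obstacle in this whole argument is of course not in this final packaging but in Proposition~\ref{p:intersecting-zero-dim-set} itself, which provides the zero-dimensional set with arbitrarily large intersection property under affine preimages; here I would simply invoke it as a black box. The only other subtlety to guard against is making sure the auxiliary map $a\mapsto 1-a$ used to avoid $r=0$ is affine and invertible, which it manifestly is, so Proposition~\ref{p:intersecting-zero-dim-set} applies with a family of five maps and the argument goes through. Thus Proposition~\ref{p:1D-hausdorff} follows in essentially one line once Proposition~\ref{p:intersecting-zero-dim-set} and Lemma~\ref{l:easy} are in hand.
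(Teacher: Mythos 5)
Your proposal is correct and follows essentially the same route as the paper: the paper also deduces the proposition immediately from Proposition~\ref{p:intersecting-zero-dim-set} via Lemma~\ref{l:easy}, using the nonemptiness of $A_{x,y}\cap(1-A_{x,y})$ together with the symmetry $A_{x,y}=-A_{x,y}$ to extract a positive $r$. No gaps.
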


\bp
This immediately follows from Proposition~\ref{p:intersecting-zero-dim-set}
and Lemma~\ref{l:easy}.
\ep



We can now easily deduce the proofs of Theorem~ \ref{t:Hausdorff} and  Proposition \ref{p:2D-hausdorff-rotated}.

\bp[Proof of Theorem~\ref{t:Hausdorff}]
Let $A$ be the set from Proposition~\ref{p:1D-hausdorff} and let $B=(A\times\R)\cup(\R\times A)$.
\ep

\bp[Proof of Proposition \ref{p:2D-hausdorff-rotated}]
Let $B\subset\R^2$ be the set given by Proposition \ref{p:intersecting-zero-dim-set}. This set is good since, if we rotate $B$ around any point by
$0,90,180$ and $270$ degrees, then the intersection of these four sets is nonempty and cannot be a singleton, as otherwise a further intersection with a translation of $B$ would be empty.%
\ep

\begin{remark}
If we did not insist that the sets $A, B$ from Propositions \ref{p:1D-hausdorff} and \ref{p:2D-hausdorff-rotated} be closed, we could just take $A, B$ to be dense $G_\delta$ subsets of $\R,\R^2$ of Hausdorff dimension zero. This would also give a simpler construction for Theorem~\ref{t:Hausdorff}.
\end{remark}

\subsection{Proof of Proposition \ref{p:intersecting-zero-dim-set} }
\label{subsec:proof-DMT}

\bp[Proof of Proposition \ref{p:intersecting-zero-dim-set} ]
Let $(\delta_i)_{i=0}^\infty$, $(\eps_i)_{i=1}^\infty$ be sequences of real numbers decreasing to zero with $\delta_0=1$, and such that
\begin{equation} \label{e:eps-delta}
\delta_i\leq \eps_i \leq \frac{\delta_{i-1}}{2\sqrt{d}+2}\quad\text{for all }i\in\N,
\end{equation}
and $\frac{\log \eps_i}{\log\delta_i} \to 0$. Let $(t_i)_{i\in\N}$ be arbitrary points in $\R^d$. For $i,p\in\N$, let
\begin{align}
F_i &= \bigcup_{k\in\Z^d} B(t_i+\eps_i k, \delta_i),\label{e:Fi}\\
K_p &= \bigcap_{j=1}^\infty F_{(2j-1)2^{p-1}}.\label{e:Kp}
\end{align}
(For concreteness we can take $t_k=0$ for all $k$, but the additional flexibility will be required later.)

Denote the identity map of $\R^d$ by $\Id$, and let $\mathcal{U}$ be the set of affine maps $f(x)=Sx+t$ on $\R^d$, such that all singular values of $S$ are strictly larger than $1$, $\|S-\Id\|<1/3$, and $\|t\|<1$. Note that this set is open and nonempty, and hence meets any dense set of affine maps.

Fix any dense set $(g_j)_{j=1}^\infty$  of invertible affine maps. For each $M\in\N$, let $B_M$, $B'_M$ be balls such that
\beq \label{e:union-disjoint-from-ball}
B(0,M) \cap \bigcup_{p=1}^M g_p(B_M) = \varnothing,
\eeq
$B'_M$ has radius $\ge 1$, and $B'_M\subset h(B_M)$ for all $h\in\mathcal{U}$. For example, we could take $B_M=B(v, \|v\|/2)$ and $B'_M=B(v,1)$ for a vector $v$ of sufficiently large norm.

For the first part, the desired set is
\[
A = \bigcup_{M=1}^\infty \bigcup_{p=1}^M g_p(K_p \cap B_M).
\]
The set $A$ is indeed closed, since $A\cap B(0,M)$ is a finite union of closed sets by \eqref{e:union-disjoint-from-ball}. Let us next see that $\hdim A=0$. Since $A \subset \cup_{p=1}^\infty g_p(K_p)$, it is enough to check that $\hdim K_p\cap Q=0$ for all $p$ and all balls $Q$ of unit radius. But $K_p\cap Q$ can be covered by $O(\eps_i^{-d})$ balls of radius $\delta_i$ so, since $\log \eps_i/\log \delta_i\to 0$, we see that indeed  $\hdim K_p\cap Q=0$.

It remains to show that if $(f_i)_{i=1}^m$ are invertible affine maps, then the intersection $\cap_{i=1}^m f_i(A)$ is nonempty.  For each $i=1,\ldots,m$, we can pick $p(i)$ such that $h_i:=f_i \circ g_{p(i)}\in\mathcal{U}$. Set $M=\max_{i=1}^m p(i)$. Then
\begin{align*}
\bigcap_{i=1}^m f_i(A) &\supset \bigcap_{i=1}^m  \bigcup_{p=1}^M f_i \circ g_p(K_p\cap B_M)\\
&\supset \bigcap_{i=1}^m h_i\left(K_{p(i)}\cap B_M\right)\\
&\supset \bigcap_{i=1}^m h_i\left(K_{p(i)}\right) \cap \bigcap_{i=1}^m h_i(B_M)\\
&\supset \bigcap_{i=1}^m \bigcap_{j=1}^\infty h_i\left(F_{(2j-1) 2^{p(i)-1}}\right) \cap B'_M.
\end{align*}

We claim that if $h'_j\in\mathcal{U}$ and $Q'$ is a closed ball of radius $1$, then $\bigcap_{j=1}^q h'_j(F_j)\cap Q'$ contains a closed ball of radius $\delta_q$. We prove this by induction. The case $q=0$ is trivial. Let $x$ be the center of a ball of radius $\delta_q$ contained in $\bigcap_{j=1}^q h'_j(F_j)\cap Q'$. Since $h'_{q+1}\in\mathcal{U}$, the image $h'_{q+1}(F_{q+1})$ contains the union of balls with of radius $\delta_{q+1}$ with centers in $h'_{q+1}(t_{q+1}+\eps_{q+1}\Z^d)$, which is a $(2\sqrt{d}\eps_{q+1})$-dense set (since $\|h'_{q+1}\|<2$ and $\Z^d$ is $\sqrt{d}$-dense). Hence there exists $y\in h'_{q+1}(t_{q+1}+\eps_{q+1}\Z^d)$ with $|y-x|<2\sqrt{d}\eps_{q+1}$. In light of \eqref{e:eps-delta}, $B(y,\delta_{q+1}) \subset B(x,\delta_q)$, and we obtain the claim.

The proof of the first part is finished by applying the claim to $h'_k=h_i$ if $k=(2j-1) 2^{p(i)-1}$ for some $i,j$, and $h'_k=\Id$ otherwise.

Now consider the case in which the $f_i$ are taken from some compact set $\mathcal{C}$. Note that if $f \circ g_i\in\mathcal{U}$, then $\widetilde{f} \circ g_i\in\mathcal{U}$ for $\widetilde{f}$ in a neighborhood of $f$. Hence it follows from the compactness of $\mathcal{C}$ that there is a number $M=M(\mathcal{C})$ such that that for any $f_1,\ldots,f_m\in\mathcal{C}$ we can find $p(1),\ldots, p(m)\in \{1,\ldots,M\}$ such that $f_i \circ g_{p(i)}\in\mathcal{U}$. Set
\[
A' = \bigcup_{p=1}^M g_p(K_p\cap B_M).
\]
This set is clearly compact, and the previous arguments show that $\hdim A'=0$ and $\cap_{i=1}^m f_i(A')\neq\varnothing$ when $f_i\in\mathcal{C}$.
\end{proof}

\subsection{Hausdorff dimension and vertices}

We now prove the part of Theorem~\ref{t:vertices} concerning Hausdorff dimension. For simplicity we restate it here, in a slightly stronger form (for the lower bound on $\hdim B$, it is enough to assume that \emph{some} vertex of the square is in $B$).

\begin{theorem} \label{t:vertices-Hdim}
 Suppose $S, B\subset\R^2$ are such that for each $s\in S$, $B$ contains at least one vertex of a square with center in $s$. Then $\hdim B\ge \max(\hdim S-1,0)$.

 Moreover, this is sharp in a strong way: for every $s\in [1,2]$, there are compact sets $S, B\subset\R^2$ such that $\hdim S=s, \hdim B=s-1$, and $B$ contains \emph{all} vertices of a square with center in all points of $S$. If $s=2$, $S$ can be taken to be $[0,1]^2$.
\end{theorem}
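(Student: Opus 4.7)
The plan is to use a projection argument. If $(a,b)\in B$ is a vertex of an axes-parallel square with center $(x,y)$ and half-side $r>0$, then $(x,y)=(a\mp r,\,b\mp r)$ for some choice of signs, so $(x,y)$ lies on one of the two lines through $(a,b)$ of slope $\pm 1$: namely $y-x = b-a$ when the two sign choices agree, and $y+x = b+a$ when they disagree. Letting $\pi_-,\pi_+\colon\R^2\to\R$ be the Lipschitz maps $\pi_-(u,v)=v-u$ and $\pi_+(u,v)=v+u$, this gives
\[
S \;\subset\; \{(x,y):y-x\in\pi_-(B)\}\;\cup\;\{(x,y):y+x\in\pi_+(B)\}.
\]
After a rotation each of these sets has the form $\R\times\pi_\pm(B)$; using the product inequality $\hdim(A\times C)\le\hdim A+\pdim C$ with $A=\pi_\pm(B)$ and $C=\R$ (so $\pdim C=1$), each piece has Hausdorff dimension at most $\hdim\pi_\pm(B)+1\le\hdim B+1$, since $\pi_\pm$ is Lipschitz. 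Hence $\hdim S\le\hdim B+1$, which combined with the trivial bound $\hdim B\ge 0$ yields $\hdim B\ge\max(\hdim S-1,0)$.

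\textbf{Setting up the sharp example.} For the construction, I would fix a compact self-similar set $T\subset[0,1]$ with $\hdim T=\pdim T=s-1$ (taking $T=[0,1]$ when $s=2$) and set $S=[0,1]\times T$, which is compact with $\hdim S=1+\hdim T=s$. I would then try to produce a compact set $A\subset\R$ satisfying
\begin{enumerate}
\item[(i)] $\hdim A=0$;
\item[(ii)] $\pdim A\le s-1$;
\item[(iii)] for each $(x,y)\in[0,1]\times T$ there exists $r>0$ with $x\pm r,\,y\pm r\in A$.
\end{enumerate}
Given such an $A$, the compact set $B:=A\times A$ contains all four vertices of a square centered at every point of $S$ by (iii), and the product inequality gives $\hdim B\le\hdim A+\pdim A\le s-1$. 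The lower bound, applied to this $S$ and $B$, then forces $\hdim B\ge s-1$, so $\hdim B=s-1$.

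\textbf{Constructing $A$: the main obstacle.} When $s=2$, condition (ii) is automatic since trivially $\pdim A\le 1$, and (i), (iii) follow from the compact version of Proposition~\ref{p:intersecting-zero-dim-set} (via Lemma~\ref{l:easy}) applied to the compact family of affine maps $\{a\mapsto a-x,\;x-a,\;a-y,\;y-a\}_{(x,y)\in[0,1]^2}$. The technical heart is the regime $s\in[1,2)$, where one must simultaneously control both dimensions of $A$. In Proposition~\ref{p:intersecting-zero-dim-set} the scales obey $\eps_i\le\delta_{i-1}/C$, which provides the density needed for the inductive claim that each finite intersection contains a ball (and hence (iii)); but a uniform box-counting bound of order $s-1$ would demand $\eps_i\gtrsim\delta_{i-1}^{s-1}$, which for $s<2$ is incompatible with the previous inequality. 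I would try to reconcile these by exploiting that the parameter family $[0,1]\times T$ has only dimension $s$, not $2$: instead of uniform grids $F_i$, I would use coverings adapted to the self-similar structure of $T$ so that the density of $F_i$ is enforced only along the $s$-dimensional set of affine maps actually parametrized by $[0,1]\times T$ at scale $\eps_i$, and may be relaxed elsewhere. Balancing (i)--(iii) by such a scale-by-scale refinement of the Davies--Marstrand--Taylor construction is what I expect to be the main difficulty.
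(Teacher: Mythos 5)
Your proof of the lower bound is correct and is essentially the paper's argument: cover $S$ by the two families of diagonal lines determined by $B$ and apply the product inequality (the paper phrases it as a four-way decomposition by vertex position followed by a $45^\circ$ rotation and a projection, which amounts to the same thing).

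The construction of the sharp example, however, cannot be completed along the lines you set up, and the obstacle you flag at the end is not a technical difficulty to be overcome but an actual impossibility. For $s\in[1,2)$ your conditions (i)--(iii) on $A$ are mutually contradictory. Indeed, applying (iii) with $x$ ranging over $[0,1]$ and any fixed $y_0\in T$ shows that every $x\in[0,1]$ is the midpoint of two points of $A$, i.e. $[0,2]\subset A+A$. Since $A+A$ is the image of $A\times A$ under the Lipschitz map $(u,v)\mapsto u+v$, this gives
\[
1=\hdim[0,2]\;\le\;\hdim(A+A)\;\le\;\hdim(A\times A)\;\le\;\hdim A+\pdim A ,
\]
so (i) forces $\pdim A=1$, contradicting (ii) when $s<2$; worse, the same chain shows directly that your $B=A\times A$ always satisfies $\hdim B\ge 1>s-1$. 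No scale-by-scale refinement of the Davies--Marstrand--Taylor grids can fix this, because the contradiction comes from the interval factor $[0,1]$ in your choice $S=[0,1]\times T$ (so adapting the covers to the self-similar structure of $T$ does not help), combined with the product form $B=A\times A$. (Your plan does work for $s=2$, where (ii) is vacuous.)

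The paper escapes this by changing the structure of both sets. In $45^\circ$-rotated coordinates it takes $S'=C\times D$ and $B'=(A\times C)\cup(D\times A)$, where $A$ has $\hdim A=0$ but is necessarily ``thick'' (full box dimension) along infinitely many scales --- exactly as the sumset obstruction above demands --- while $C$ and $D$ are produced by a splicing construction (Lemma~\ref{l:Hdimcalculation}) so that each of them is thin precisely at the scales where $A$ is thick, and they are thick at complementary sets of scales from each other. This yields $\hdim(A\times C)=\hdim(A\times D)=s-1$ while still $\hdim(C\times D)=s$ (Proposition~\ref{p:existenceACD}); note in particular that the set of centers is not of the form $[0,1]\times T$ and the container is not a product of one set with itself. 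Some mechanism of this kind, trading Hausdorff against box dimension across scales, is unavoidable, so the missing piece in your argument is the entire construction rather than a detail.
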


The first part of the theorem is easy, and the main difficulty is to construct an example showing it is sharp in the sense of the second part. For this the key is to construct sets $A, C, D$ satisfying the properties given in the following proposition.

\begin{prop} \label{p:existenceACD}
 For every $s\in [1,2]$, there exist compact sets $A,C,D\subset\R$ such that:
 \begin{enumerate}[(i)]
  \item For every $x,y\in [0,1]$, there is $r>0$ such that $x-r,x+r,y-r,y+r\in A$.
  \item $\hdim A\times C=\hdim A\times D=s-1$.
  \item $\hdim(C\times D)=s$.
 \end{enumerate}
 Moreover, if $s=2$, then we can take $C=D=[0,1]$.
\end{prop}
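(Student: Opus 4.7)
The case $s=2$ is immediate: take $A$ from Proposition~\ref{p:1D-hausdorff} and set $C=D=[0,1]$. Property (i) is built into $A$, (iii) is obvious, and (ii) follows from
\[
1=\hdim A+\hdim[0,1]\le\hdim(A\times[0,1])\le\pdim A+\hdim[0,1]=1,
\]
using that the proof of Proposition~\ref{p:intersecting-zero-dim-set} actually yields $\pdim A=0$ (each $K_p$ has zero upper box dimension because $\log\eps_i/\log\delta_i\to 0$).

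For $s\in[1,2)$ the plan is to build $A,C,D\subset[0,1]$ simultaneously as nested intersections of finite unions of closed intervals, on a common rapidly decreasing scale sequence $\rho_n\searrow 0$ satisfying $\log\rho_{n-1}/\log\rho_n\to 0$. A labelling $\tau\colon\N\to\{A,C,D\}$ in which each letter appears infinitely often decides which of the three sets is refined at stage $n$, the other two being untouched. The $A$-stages follow the one-dimensional version of Proposition~\ref{p:intersecting-zero-dim-set}, so that the resulting $A$ has $\hdim A=\pdim A=0$ and satisfies property (i) via Lemma~\ref{l:easy}. At each $C$-stage (respectively $D$-stage) every surviving interval of $C$ (respectively $D$) is replaced by an arithmetic progression of evenly-spaced sub-intervals at the new scale, with refinement density calibrated so that the accumulated upper box exponent of $C$ and $D$ comes out to $s-1$. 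Because the scale ratios $\rho_{n-1}/\rho_n$ are enormous, at every stage the two ``non-active'' sets still consist of intervals much longer than $\rho_n$ and hence look essentially like full intervals at the current scale; this scale-decoupling is what permits the Hausdorff dimensions of $A$, $C$ and $D$ to be zero while their pairwise products have positive Hausdorff dimension.

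The product dimensions are verified by matching upper and lower bounds. For $\hdim(A\times C)\le s-1$ one covers $A\times C$ at a scale $\rho_n$ which is a $C$-stage: $C$ needs $\rho_n^{-(s-1)+o(1)}$ length-$\rho_n$ intervals, while $A$ (with its last refinement at a scale $\rho_{n_A}$ chosen close to $\rho_n$ in log scale) contributes only a sub-polynomial number of additional length-$\rho_n$ intervals; analogously for $\hdim(A\times D)\le s-1$. For $\hdim(C\times D)\le s$, cover at a scale just after a $D$-stage: $D$ contributes $\rho_n^{-(s-1)+o(1)}$ intervals and $C$, frozen at its last $C$-stage, contributes $\rho_n^{-1+o(1)}$ after sub-division. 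The matching lower bounds come from natural Frostman measures on $A\times C$ and $C\times D$ (products of the uniform counting measure on the tree of one set with the natural Cantor-type measure on the other), whose scale-decoupling at non-active stages yields $\mu(B((x,y),r))\lesssim r^{s-1-o(1)}$ and $\mu(B((x,y),r))\lesssim r^{s-o(1)}$ respectively; the mass distribution principle then closes the argument.

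The main obstacle will be the joint tuning of $\rho_n$ and of $\tau$: the three scale sub-sequences must be interleaved so that (a) the $A$-refinement keeps satisfying the conditions of Proposition~\ref{p:intersecting-zero-dim-set} needed for property (i); (b) the ``non-active'' sets look essentially like long intervals at the stages of the other two letters, which is needed both for the upper-bound covers and for the Frostman lower bounds; and (c) the accumulated refinement densities at $C$- and $D$-stages produce exactly $\pdim C=\pdim D=s-1$ and force all three product identities. Once this combinatorial scheduling is pinned down, each dimension identity follows from standard covering and mass-distribution arguments, and the compactness of $A$, $C$, $D$ is automatic from the construction.
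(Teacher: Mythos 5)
There is a genuine gap, and it sits at the heart of your construction: the scheduling ``exactly one of $A,C,D$ is refined at stage $n$, the other two untouched'' cannot yield $\hdim(A\times C)=s-1$ when $s<2$. Suppose stage $n$ is a $C$-stage and let $n_A<n$ be the last $A$-stage. Since $A$ is untouched after stage $n_A$, it is a union of $\rho_{n_A}^{-o(1)}$ intervals of length about $\rho_{n_A}$, of total length $\rho_{n_A}^{1-o(1)}$; covering it by intervals of length $\rho_n$ therefore costs about $\rho_{n_A}^{1-o(1)}/\rho_n=\rho_n^{-1+o(1)}$ intervals, because your own hypothesis $\log\rho_{n-1}/\log\rho_n\to 0$ forces $\rho_{n_A}\ge\rho_{n-1}=\rho_n^{o(1)}$. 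This contradicts your claim that $A$ ``contributes only a sub-polynomial number of additional length-$\rho_n$ intervals'' (the parenthetical ``$\rho_{n_A}$ chosen close to $\rho_n$ in log scale'' is incompatible with the decoupling of consecutive scales, which you need elsewhere). Symmetrically, at an $A$-stage one has $N_r(A)=r^{-o(1)}$ but $N_r(C)=r^{-1+o(1)}$ because $C$ is frozen. So $N_r(A)\cdot N_r(C)\ge r^{-1+o(1)}$ at \emph{every} scale, and the natural product measure gives every $r$-ball mass at most $r^{1-o(1)}$; your sets satisfy $\hdim(A\times C)=1$, not $s-1$. (In the paper's splicing language: if $A$ and $C$ are thinned on disjoint sets of digit blocks, the constituents of the splice $A\times C$ are $\{0\}\times[0,1]$, $[0,1]\times B$ and $[0,1]^2$, and Lemma~\ref{l:Hdimcalculation} gives exactly $\min(1,s,2)=1$.) The obstruction is structural: a cover of $A\times C$ of exponent $s-1$ at scale $r$ requires $N_r(A)=r^{-o(1)}$ and $N_r(C)=r^{-(s-1)+o(1)}$ \emph{simultaneously}, which forces the refinements of $A$ and of $C$ to bottom out at comparable scales infinitely often.

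The paper's construction implements precisely this co-scheduling: each component $K_p$ of $A$ is thinned on the blocks of $2$-adic valuation $p$, $C$ on the blocks whose odd part is $\equiv 1\pmod 4$, and $D$ on those whose odd part is $\equiv 3\pmod 4$. Hence each $K_p$ is thinned together with $C$ (and with $D$) infinitely often — so the factor $B\times\{0\}$, of dimension $s-1$, occurs infinitely often in the splice $C\times K_p$ and Lemma~\ref{l:Hdimcalculation} gives $\hdim(C\times K_p)=s-1$ — while $C$ and $D$ are never thinned together, so $B\times B$ never occurs and $\hdim(C\times D)=s$. Two smaller points. First, your parenthetical claim that each $K_p$ has zero upper box dimension is false: $\log\eps_i/\log\delta_i\to 0$ controls coverings only at the scales $\delta_i$, whereas at the intermediate scales $\eps_{i+1}$ one has $N(\eps_{i+1})=\eps_{i+1}^{-1+o(1)}$, so $\ubdim K_p=1$ (and likewise $\pdim A$ need not be $0$). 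Your $s=2$ case survives nonetheless, but via the product inequality in the correct order, $\hdim(A\times[0,1])\le\hdim A+\pdim[0,1]=1$. Second, for the same reason your construction would give $\pdim C=1$ rather than $s-1$, though this is not required by the proposition.
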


We first show how to deduce Theorem \ref{t:vertices-Hdim} from this proposition, and give the proof of the proposition in the remainder of the section.

\begin{proof}[Proof of Theorem \ref{t:vertices-Hdim}]
It is more convenient to work with 45 degree rotations; hence we use the convention that if $F\subset\R^2$, then $F'$ is its $45$ degree rotation.

For the first part, decompose $S=\cup_{i=1}^4 S_i$ where $B$ contains an upper-left vertex of a square with center in every point of $S_1$, and likewise for $S_2, S_3, S_4$ and the remaining positions of the vertices. Then $\hdim S=\hdim S_i$ for some $i$; without loss of generality, $i=1$ and we may assume $B$ contains an upper-left vertex of a square with center in every point of $S$. Now we only need to notice that if $\pi(x,y)=x$ is the projection onto the $x$-axis, then $\pi(S')\subset\pi(B')$, and therefore $S'\subset  B'\times\R$, so $\hdim S\le 1+\hdim B$ and we are done.

For the second part, we take $S,B$ such that $S'=C\times D$, $B'=(A\times C)\cup (D\times A)$, where $A, C, D$ are the sets from Proposition \ref{p:existenceACD}. Then $\hdim S=s, \hdim B=s-1$. Moreover, for every $x,y\in [0,1]$ (in particular, for every $x\in C, y\in D$), there is $r>0$ such that $x-r,x+r,y-r,y+r\in A$, and this implies that $B$ contains the vertices of a square with center in every point of $S$, as desired. If $s=2$, then $S'=[0,1]^2$; as the problem is invariant under homotheties, there is an example in which $S$ contains $[0,1]^2$, as claimed.
\end{proof}

For the proof of Proposition \ref{p:existenceACD} we will need a standard construction which consists in pasting together a countable sequence of sets  along dyadic scales. In order to define this operation it is more convenient to use symbolic notation. We work in an ambient dimension $d$ (in our later application, $d$ will be either $1$ or $2$). Given a finite sequence $\ii=(i_1,\ldots,i_n)\in \Lambda^n$, where $\Lambda=\{0,1\}^d$, we define
\[
Q(\ii) = \prod_{k=1}^d \left[\sum_{j=1}^n 2^{-j} (i_j)_k, \sum_{j=1}^n 2^{-j} (i_j)_k+ 2^{-n}\right].
\]
In other words, $Q(\ii)$ is the closed dyadic cube of side-length $2^{-n}$ whose position is described by the sequence $\ii$. We can also express $Q(\ii)$ as the set of those points $(x_1,\ldots,x_d)\in[0,1]^d$ for which every $x_j$ can be written in base $2$ so that the first $n$ digits after the binary point are  $(i_1)_j,\ldots, (i_n)_j$.

Given a set $X\subset [0,1]^d$ and $n\in\N$ let
\[
\mathcal{Q}(X,n)=\{ \ii\in \Lambda^n: Q(\ii)\cap X\neq\varnothing\},
\]
that is, $\mathcal{Q}(X,n)$ consists of sequences describing the cubes of step $n$ that hit $Q$.

We can now describe the splicing operation for sets. We fix a strictly increasing sequence $(a_n)_{n=0}^\infty$ of natural numbers with $a_0=0$, which is rapidly increasing in the sense that $a_n/a_{n+1}\to 0$. For example, we could take $a_n=2^{2^n}-1$. Now given a sequence $X=(X_i)_{i=1}^\infty$ of subsets of $[0,1]^d$, we define the splicing $\spl(X)= \bigcap_{n=1}^\infty E_n$, where
\begin{equation} \label{e:defEn}
E_n = \bigcup\left\{ Q(\ii_1\ii_2\ldots \ii_n): \ii_j \in \mathcal{Q}(X_j,a_j-a_{j-1})\right\}.
\end{equation}
(Here $\ii_1\ii_2\ldots \ii_n$ is obtained by concatenating the corresponding sequences.) Splicing preserves cartesian products: $\spl(X\times X')=\spl(X)\times \spl(X')$, where $(X\times X')_n =X_n\times X'_n$. This property will be exploited later.

The next lemma gives the value of the Hausdorff dimension of $\spl(X)$ under some assumptions.
\begin{lemma} \label{l:Hdimcalculation}
 Let $\mathcal{Z}=\{Z_1,\ldots, Z_m\}$ be a finite collection of subsets of $[0,1]^d$. Let $X=(X_j)_{j=1}^\infty$ be a sequence of subsets of $[0,1]^d$ such that $X_j\in\mathcal{Z}$ for all $j$, and each $Z_i$ appears infinitely often in $X$.

 Then
 \begin{align*}
 \lbdim \spl(X) &\le \min(\ubdim Z_1,\ldots,\ubdim Z_m),\\
 \hdim \spl(X) &\ge \min(\hdim Z_1,\ldots,\hdim Z_m).
 \end{align*}
 In particular, if $\dim_H Z_i=\dim_B Z_i$ for all $i$, then
 \[
 \hdim \spl(X) = \min(\hdim Z_1,\ldots,\hdim Z_m).
 \]
\end{lemma}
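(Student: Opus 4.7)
The plan is to prove the two inequalities separately and derive the ``in particular'' clause from them. Both directions hinge on the rapid growth assumption $a_{j-1}/a_j\to 0$, which asymptotically makes the $j$th block dominate all previous blocks combined.

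\textbf{Upper bound for $\lbdim \spl(X)$.} Fix $i^*$ achieving $\min_i \ubdim Z_i$ and pick $t>\ubdim Z_{i^*}$. By definition of $E_n$ in \eqref{e:defEn}, the number of cubes of side $2^{-a_j}$ needed to cover $\spl(X)$ is at most $\prod_{\ell=1}^j N_\ell$, where $N_\ell=|\mathcal{Q}(X_\ell,a_\ell-a_{\ell-1})|$ is the covering number of $X_\ell$ at scale $2^{-(a_\ell-a_{\ell-1})}$. The crude bound $N_\ell\le 2^{d(a_\ell-a_{\ell-1})}$ applies to all $\ell$, while for indices $j$ with $X_j=Z_{i^*}$ (of which there are infinitely many by hypothesis), one has $N_j\le C\,2^{t(a_j-a_{j-1})}$ for large $j$. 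Multiplying,
\[
\prod_{\ell=1}^j N_\ell \;\le\; C\,2^{d\,a_{j-1}+t(a_j-a_{j-1})},
\]
so dividing by $a_j\log 2$ and using $a_{j-1}/a_j\to 0$, the ratio tends to $t$ as $j\to\infty$ along this subsequence. Hence $\lbdim \spl(X)\le t$, and letting $t\downarrow \ubdim Z_{i^*}$ gives the bound.

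\textbf{Lower bound for $\hdim \spl(X)$.} I would use the mass distribution principle. Fix any $s<\min_i\hdim Z_i$, and for each $i$ choose, by Frostman's lemma, a Borel probability measure $\nu_i$ supported on $Z_i$ with $\nu_i(B(x,r))\le C_0\,r^s$ for all $x,r$. Define a probability measure $\mu$ on $\spl(X)$ by specifying its value on the cubes of $E_j$ through
\[
\mu\bigl(Q(\ii_1\ii_2\cdots \ii_j)\bigr) \;=\; \prod_{\ell=1}^{j}\nu_{X_\ell}\bigl(Q(\ii_\ell)\bigr),
\]
where on the right $Q(\ii_\ell)$ is regarded as a dyadic subcube of $[0,1]^d$ of side $2^{-(a_\ell-a_{\ell-1})}$. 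Consistency and total mass $1$ follow from $\sum_{\ii_\ell\in\mathcal{Q}(X_\ell,a_\ell-a_{\ell-1})}\nu_{X_\ell}(Q(\ii_\ell))=\nu_{X_\ell}([0,1]^d)=1$. Given $r$ small, pick $j$ with $2^{-a_j}\le r<2^{-a_{j-1}}$. Grouping the level-$j$ cubes that meet $B(x,r)$ according to which level-$(j-1)$ cube contains them, the product structure gives
\[
\mu(B(x,r)) \;\le\; C\,\mu\bigl(Q(\ii_1\cdots\ii_{j-1})\bigr)\cdot \nu_{X_j}\bigl(\widetilde B\bigr) \;\le\; C\,C_0\,(r\cdot 2^{a_{j-1}})^s,
\]
where $\widetilde B$ is the rescaling of $B(x,r)$ from the enclosing level-$(j-1)$ cube up to $[0,1]^d$, of radius $r\cdot 2^{a_{j-1}}$. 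Since $r\ge 2^{-a_j}$, this yields $\mu(B(x,r))\le C'\,r^{s(1-a_{j-1}/a_j)}$; by $a_{j-1}/a_j\to 0$, for any $\eps>0$ and all sufficiently small $r$ we have $\mu(B(x,r))\le C'\,r^{s-\eps}$. The mass distribution principle then gives $\hdim \spl(X)\ge s-\eps$, and letting $\eps\downarrow 0$ and $s\uparrow \min_i\hdim Z_i$ finishes the lower bound.

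\textbf{The ``in particular'' clause.} If $\hdim Z_i=\bdim Z_i$ for every $i$, combining the inequalities yields
\[
\min_i \hdim Z_i \;\le\; \hdim \spl(X) \;\le\; \lbdim \spl(X) \;\le\; \min_i \ubdim Z_i \;=\; \min_i \hdim Z_i,
\]
so equality holds throughout. The main technical obstacle is making the Frostman estimate for $\mu$ work uniformly across all ``boundary'' scales $r\in[2^{-a_j},2^{-a_{j-1}})$; this is where the rapid-growth condition $a_{j-1}/a_j\to 0$ is essential, since it absorbs the artificial multiplicative factor $2^{a_{j-1}s}$ into $r^{-\eps}$.
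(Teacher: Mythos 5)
Your proof is correct and follows essentially the same route as the paper: the same product-of-covering-numbers bound (with the crude $2^{da_{j-1}}$ prefix count) for the lower box dimension, and the same product Frostman measure plus the mass distribution principle for the Hausdorff lower bound. The only cosmetic difference is in verifying the Frostman estimate for $\mu$: the paper applies the Frostman property of every factor measure on dyadic cubes and absorbs the resulting $C^{k+1}$ into $2^{\eps n}$, whereas you apply it only to the active block and absorb the $2^{sa_{j-1}}$ loss via $a_{j-1}/a_j\to 0$; both are valid.
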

\bp
Write $E=\spl(X)$, and $\pi:\N\to\{1,\ldots,m\}$ for the map $X_j = Z_{\pi(j)}$. We first prove the upper bound. Let $i$ be such that $\ubdim Z_i$ is minimal, and pick any $j\in\pi^{-1}(i)$. Then
\[
|\mathcal{Q}(E,a_j)| \le 2^{a_{j-1}} |\mathcal{Q}(Z_i,a_j-a_{j-1})|.
\]
Hence
\begin{align*}
\lbdim(E) &\le \liminf_{j\to\infty,j\in\pi^{-1}(i)} \frac{\log_2 |\mathcal{Q}(E,a_j)| }{a_j}\\
&\le  \liminf_{j\to\infty, j\in\pi^{-1}(i)}  \frac{a_{j-1}+\log_2 |\mathcal{Q}(Z_i,a_j-a_{j-1})|}{a_j}\\
&= \liminf_{j\to\infty, j\in\pi^{-1}(i)}  \frac{\log_2 |\mathcal{Q}(Z_i,a_j-a_{j-1})|}{a_j-a_{j-1}} \le \ubdim(Z_i).
\end{align*}

We now prove the lower bound. If $\dim_H(Z_i)=0$ for some $i$, there is nothing to do. Otherwise, let $0<s<\hdim(Z_i)$ for all $i$. By Frostman's Lemma (see e.g. \cite[Corollary 4.12]{Falconer03}), there are a constant $C$ and measures $\mu_i$ supported on $Z_i$ ($i=1,\ldots,m$) such that $\mu_i(B(x,r))\le C\, r^s$ for all $x\in [0,1]^d, r>0$.

We construct a measure $\nu$ supported on $E$, in a similar way to the construction of $E$. Namely, suppose $\ii\in \Lambda^n$, where $a_k \le n < a_{k+1}$. Decompose $\ii=(\ii_1,\ldots,\ii_{k+1})$, where $\ii_j\in \Lambda^{a_j-a_{j-1}}$ if $j=1,\ldots,k$, and $\ii_{k+1}\in \Lambda^{n-a_k}$. Then we define
\[
\nu(Q(\ii)) = \mu_1(Q(\ii_1))\cdots \mu_{k+1}(Q(\ii_{k+1})).
\]
It is easy to check that this does define a Borel measure on $[0,1]^d$, which is in fact supported on $E$. Now by the Frostman condition,
\[
\nu(Q(\ii)) \le C^{k+1} 2^{-sn} = O(2^{(\eps-s)n})
\]
for any $\eps>0$. By the mass distribution principle (see \cite[Mass Distribution Principle 4.2]{Falconer03}), $\hdim E\ge s-\eps$, so after letting $s\to \min_i\hdim(Z_i),\eps\to 0$ we are done.
\ep

\begin{proof}[Proof of Proposition \ref{p:existenceACD}]

Let $(a_j)$ be the sequence introduced previously. 
Let $t_j=\de_j=2^{-a_{2j}}/2$ and $\eps_j=2^{-a_{2j-1}}$,
and note that $\frac{\log\eps_j}{\log\delta_j}\to 0$ and $\delta_j\ll\eps_j\ll\delta_{j-1}$.

Let $\mathcal{C}$ be the family of affine maps of the form $\pm x+ b$ with $b\in [-2,2]$
 and let $A$ be the compact set constructed in the proof of Proposition \ref{p:intersecting-zero-dim-set} using these sequences and for affine maps taken from $\mathcal{C}$. Then Lemma~\ref{l:easy} shows that \textit{(i)} holds.

Let $F_j, K_p$ be as in Equations \eqref{e:Fi},\eqref{e:Kp} in the proof Proposition \ref{p:intersecting-zero-dim-set}. With our choice of sequences, we get
\[
F_j = \bigcup_{k\in\Z} [k 2^{-a_{2j-1}},k 2^{-a_{2j-1}} + 2^{-a_{2j}}].
\]
Hence $F_j$ contains exactly those real numbers that can be written in base $2$ so
that for every $a_{2j-1}<i\le a_{2j}$ the $i$-th digit after the binary point is $0$.
This allows us to express the sets $F_j\cap [0,1]$ and $K_p\cap [0,1]$ in the language of splicing.
Note that $F_j\cap[0,1]=\spl(X)$ if $X_{2j}=\{0\}$ and $X_i=[0,1]$ for $i\neq 2j$.
Thus, if we define a sequence $X^{(p)}$ by $X^{(p)}_n=\{0\}$ if $n$ is of the form $(2j-1)\cdot 2^p$ with $j,p\in\N$, and $X^{(p)}_n=[0,1]$ otherwise,
we get
\[
K_p\cap[0,1] = \bigcap_{j=1}^\infty \left(F_{(2j-1)2^{p-1}}\cap [0,1]\right) =  \spl(X^{(p)}).
\]
We will now define the desired sets $C, D$. Let $B\subset [0,1]$ be any set with $\hdim B=\bdim B=s-1$. For $s=2$, take $B=[0,1]$. Define sequences $X'_n, X''_n$ of subsets of $[0,1]$ as follows:
\begin{align*}
X'_{n} = B \text{ if $n=(4j-3)2^p$ for some $j,p\in\N$, and } X'_n = [0,1] \text{ otherwise},\\
X''_{n} = B \text{ if $n=(4j-1)2^p$ for some $j,p\in\N$, and } X'_n = [0,1] \text{ otherwise},
\end{align*}
and set $C=\spl(X'), D=\spl(X'')$. Note that if $s=2$, then $C=D=[0,1]$. It follows from Lemma \ref{l:Hdimcalculation} that $\hdim(C)=\hdim(D)=\hdim(B)=s-1$. Moreover,  since $C\times D= \spl(X'\times X'')$, and $(X'\times X'')_n$ is one of $B\times [0,1]$, $[0,1]\times B$, $[0,1]^2$, with each of these appearing infinitely often, we get
$\hdim(C\times D)=\hdim(B\times [0,1])=s$, using Lemma \ref{l:Hdimcalculation} again.

It remains to show that $\hdim(A\times C)=\hdim(A\times D)=s-1$. We prove this for $C$; for $D$ the argument is the same.

Clearly $\hdim(A\times C)\ge \hdim(C)=s-1$, so we need to establish the upper bound. Note that with our choices the $F_j$ are $1$-periodic, and therefore so is $K_p$. Recall from the proof of Proposition \ref{p:intersecting-zero-dim-set} that $A= \bigcup_{p=1}^M g_p(K_p)$, where $g_1,\ldots,g_M$ are affine maps. Hence it is enough to show that $\hdim(C\times (K_p\cap [0,1]))\le s-1$ for each $p$. But $C\times (K_p\cap [0,1])= \spl(X'\times X^{(p)})$, and each $X'_n\times X^{(p)}_n$ is either $B\times \{0\}$, $B\times [0,1]$, $[0,1]\times \{0\}$ or $[0,1]^2$, with each of these appearing infinitely often. All these sets have equal Hausdorff and box-counting dimension, and the smallest dimension is $s-1=\hdim(B\times \{0\})$, hence a final application of Lemma \ref{l:Hdimcalculation} gives $\hdim(C\times (K_p\cap [0,1]))= s-1$. This finishes the proof.
\end{proof}

\section{Proofs of the discrete results} \label{sec:discrete}

\subsection{General bounds} The following is the first part of Theorem~\ref{t:discrete-vertices}. We state it separately as the proofs of both parts are unrelated, and also because it will be key for the remaining estimates as well.

\begin{lemma}[{\bf Two-Dimensional Main Lemma}] \label{l:main2D}
Let $B\subset\mathbb{R}^2$ be a finite set, and let
$$S=\{(x,y)\in\mathbb{R}^2~|~\exists r~(x-r,~y-r),~(x+r,~y-r),~ (x-r,~y+r),~(x+r,y+r)\in B\}.$$

Then $|S|\leq (2 |B|)^{\frac{4}{3}}$.
\end{lemma}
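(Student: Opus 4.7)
The plan is to recast the problem in simpler coordinates and then apply a chain of Cauchy-Schwarz and incidence arguments. The first step is a $45^\circ$ rotation: the linear change of coordinates $(x,y)\mapsto (u,v):=(x+y,\,x-y)$ sends the four square vertices $(x\pm r, y\pm r)$ to the four tips $(u\pm 2r, v)$ and $(u, v\pm 2r)$ of an axis-aligned ``plus sign'' centered at $(u,v)$. Setting $B':=\{(x+y,x-y):(x,y)\in B\}$ and renaming the radius, the lemma reduces to showing that
\[
|S'|\le (2|B'|)^{4/3},\qquad S':=\bigl\{(u,v)\in\mathbb{R}^2:\exists\,r>0,\ (u\pm r,v),\,(u,v\pm r)\in B'\bigr\}.
\]

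For each $(u,v)\in S'$ I would choose a witness $r=r(u,v)>0$. The key observation is that any two of the four tips of the resulting plus sign determine its center and radius, so the map $(u,v)\mapsto ((u-r,v),(u,v-r))$ (west and south tips) is an injection from $S'$ into $(B')^2$. Its image is contained in the collection of pairs $(p,q)\in (B')^2$ lying on a common slope-$-1$ line (with $p$ upper-left of $q$) such that the two remaining tips $(2q_x-p_x,p_y)$ and $(q_x,2p_y-q_y)$, which are uniquely determined by $p$ and $q$, also belong to $B'$.

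From this encoding, Cauchy-Schwarz reduces the problem to counting ``V-shapes'' in $B'$, namely triples of points in $B'$ which form three of the four tips of some plus sign. Such a V-shape consists of an apex $q\in B'$ together with two arm-tips on the slope-$+1$ and slope-$-1$ lines through $q$ at a common positive offset. Letting $T_+$ and $T_-$ denote the numbers of ordered pairs of points of $B'$ on lines of slope $+1$ and $-1$ respectively, two further Cauchy-Schwarz applications --- first to drop one of the two ``missing tip'' constraints, then summing over the apex --- would give $|S'|\le N\le\sqrt{T_+T_-}$, where $N$ is the V-shape count.

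The hardest step will be converting the bound $\sqrt{T_+T_-}$ into the sharp exponent $4/3$. In extremal configurations $\sqrt{T_+T_-}$ can be as large as $|B'|^{3/2}$, so naive Cauchy-Schwarz alone is not enough. What must be exploited is the \emph{equal-offset} constraint between the two arm-tips of each V-shape, which couples the slope-$+1$ and slope-$-1$ structure much more tightly than the separate bounds $T_\pm$ reflect. The natural strategy is to recast this matching condition as an incidence between carefully chosen points and lines and invoke a Szemer\'edi--Trotter-type estimate; alternatively a dichotomy based on how much $B'$ concentrates on diagonal lines may give the bound by a case analysis, handling the concentrated and spread-out cases separately. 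Identifying the correct incidence structure to extract the $4/3$ exponent is the main obstacle of the proof.
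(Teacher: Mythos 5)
Your setup is sound --- the $45^\circ$ rotation is just a reformulation of the fact that two of the four vertices lie on the slope-$1$ line through the center and two on the slope-$(-1)$ line, which is exactly where the paper starts --- but the proof is not complete: you explicitly leave the passage from $\sqrt{T_+T_-}$ (which, as you note, can be as large as $|B|^{3/2}$) to the exponent $4/3$ as an open obstacle, and that passage is the entire content of the lemma. No Szemer\'edi--Trotter input is needed. The missing idea is an elementary dichotomy applied to the counts $p_1,\dots,p_k$ of points of $S$ on the slope-$1$ lines: on every diagonal line $\ell$ one has $|S\cap\ell|\le\binom{|B\cap\ell|}{2}$, hence $|B\cap\ell|\ge|S\cap\ell|^{1/2}$ and so $|B|\ge\sum_i\sqrt{p_i}$ (and likewise for the slope-$(-1)$ counts $q_j$). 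Either at least half of $|S|$ lies on slope-$1$ lines with $p_i<\sqrt{|S|}$, in which case $\sqrt{p_i}\ge p_i/|S|^{1/4}$ gives $|B|\ge\tfrac12|S|^{3/4}$ directly; or at least half of $|S|$ lies on the at most $\sqrt{|S|}$ ``heavy'' slope-$1$ lines, and then --- since a slope-$1$ line and a slope-$(-1)$ line meet in at most one point --- every slope-$(-1)$ line contains at most $\sqrt{|S|}$ of these points, so the same computation applied to the $q_j$, with $q_j$ replaced by the number of heavy-line points on the $j$th line, again yields $|B|\ge\tfrac12|S|^{3/4}$, i.e.\ $|S|\le(2|B|)^{4/3}$. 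This is precisely the ``dichotomy based on how much $B'$ concentrates on diagonal lines'' you gesture at in your last sentence, but as written your argument stops exactly where the real work begins, so it does not constitute a proof.
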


\bp Assume that a line $\ell$ with gradient $\pm 1$ intersects $S$. Then for each $p\in S\cap \ell$, $\ell$ contains two points of $B$ which are equidistant from $x$. This implies that $|S\cap \ell|\leq \binom{|B\cap \ell|}{2}$, so $|B\cap \ell|\geq |S\cap \ell|^{1/2}$.

Assume that there are $k$ lines with gradient 1 intersecting $S$, and they intersect it $p_1, p_2, \dots, p_k$ times. Also assume that there are $m$ lines with gradient -1 intersecting $S$, and they intersect it $q_1, q_2, \dots, q_m$ times. Then
\[
|S|=\sum_{i=1}^k p_i=\sum_{j=1}^m q_i,
\]
\begin{align*}
|B|&\geq \sum_{i=1}^k \sqrt{p_i},\\
|B|&\geq \sum_{j=1}^m \sqrt{q_i}.
\end{align*}

Divide the numbers $p_1, \dots, p_k$ into two groups. Let $a_1, \dots, a_v$ be the ones that are smaller than $\sqrt{|S|}$, and let $b_1, \dots,b_w$ be the remaining ones. Note that $w\leq \sqrt{|S|}$.
\medskip

Case 1: $a_1+a_2+\dots +a_v\geq \frac{|S|}{2}$.

\[
|B|\geq \sum_{i=1}^k \sqrt{p_i}\geq \sum_{i=1}^v \sqrt{a_i}\geq \sum_{i=1}^v \frac{a_i}{|S|^\frac{1}{4}}
\geq |S|^{-\frac{1}{4}} \frac{|S|}{2}=\frac{1}{2}\cdot |S|^{\frac{3}{4}}.
\]
\medskip

Case 2: $b_1+b_2+\dots +b_w\geq \frac{|S|}{2}$.

Consider the lines of gradient 1 that contain at least $\sqrt{|S|}$ points of $S$. Color all points of $S$ on these lines red. So we have $b_1+b_2+\dots +b_w\geq \frac{|S|}{2}$ red points. Let $q'_j$ denote the number of red points on the $j$th line with gradient -1 (this line has $q_j$ points of $S$ in total). Then obviously
\[
q'_j\leq \min(q_j,w)\leq \min(q_j, \sqrt{|S|}),
\]
and hence
\[
|B|\geq \sum_{j=1}^m \sqrt{q_i}\geq \sum_{j=1}^m \sqrt{q'_i}\geq \sum_{j=1}^m \frac{q'_i}{|S|^{\frac{1}{4}}}
\geq |S|^{-\frac{1}{4}} \frac{|S|}{2}=\frac{1}{2}\cdot |S|^{\frac{3}{4}}.
\]
\ep

As an immediate corollary, we get:

\begin{lemma}[{\bf One-Dimensional Main Lemma}]\label{l:main1D}
Let $A\subset\mathbb{R}$ be a finite set, and let
$$S=\{(x,y)\in\mathbb{R}^2~|~\exists r~x-r,~x+r,~y-r,~y+r\in A\}.$$

Then $|S|\leq  2^{\tfrac{4}{3}} |A|^{\tfrac{8}{3}}$.
\end{lemma}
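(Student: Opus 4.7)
The plan is to reduce the One-Dimensional Main Lemma directly to the Two-Dimensional Main Lemma (Lemma~\ref{l:main2D}) via a simple product construction. Given $A\subset\mathbb{R}$, I would set
\[
B = A\times A \subset \mathbb{R}^2,
\]
so that $|B|=|A|^2$. The key observation is that for any $r\in\mathbb{R}$ and any $(x,y)\in\mathbb{R}^2$, the four points $(x\pm r,\,y\pm r)$ all lie in $B=A\times A$ if and only if $x-r,\,x+r,\,y-r,\,y+r$ all lie in $A$ (since membership in the product reduces to membership of each coordinate in $A$, and the first coordinates of the four vertices are just $x\pm r$ while the second coordinates are $y\pm r$).

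Consequently, the set $S$ defined in Lemma~\ref{l:main1D} coincides exactly with the set $S$ defined in Lemma~\ref{l:main2D} when the latter is applied with this particular choice of $B$. Applying Lemma~\ref{l:main2D} then gives
\[
|S| \le (2|B|)^{4/3} = \bigl(2|A|^2\bigr)^{4/3} = 2^{4/3}\, |A|^{8/3},
\]
which is precisely the desired bound.

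There is no real obstacle here — the argument is a one-line corollary once the product identification is made, and the author has in fact indicated exactly this by stating the lemma as an ``immediate corollary.'' The only thing worth double-checking is that the correspondence between the 1D and 2D conditions is an equality of sets (not just an inclusion), which is immediate from the definition of the Cartesian product. Thus the entire proof reduces to noting this equivalence and invoking Lemma~\ref{l:main2D}.
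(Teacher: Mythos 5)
Your proposal is correct and is exactly the paper's proof: the authors also apply the Two-Dimensional Main Lemma to $B=A\times A$ and note $|B|=|A|^2$, yielding $|S|\le(2|A|^2)^{4/3}=2^{4/3}|A|^{8/3}$. Your additional verification that the 1D and 2D conditions define the same set $S$ is a correct (if routine) detail.
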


\bp Apply the above Two-Dimensional Main Lemma to $B=A\times A$.
\ep

The above lemma will be key in the proof of the first part of Theorem~\ref{t:discrete-squares}:

\bp[Proof of Theorem~\ref{t:discrete-squares}(a)]
We may assume that $B=\cup_{s\in S}\partial Q(s,r(s))$ for some function $r:S\to\N$, where $\partial Q(s,r)$ is the discrete square boundary with center $s$ and side length $r$.
For each $j\in\N$, let $S_j=\{s\in S: r(s)\in [2^{j-1},2^j)\}$ and $B_j=\cup_{s\in S_j}\partial Q(s,r(s))$. We may assume $S_j$ is empty for $j\ge \log|S|$, otherwise $|B|\ge |S|$ and we are done. Hence we can pick some $j$ such that $|S_j|\ge \Omega(|S|/\log |S|)$; we work with this $j$ for the rest of the proof.  Note that it is enough to show that $|B_j|  \ge \Omega(|S_j|^{7/8})$.

Split $\Z^2$ into disjoint squares of side length $2^j$, and let $\mathcal{R}_{j,k}$ be the collection of those squares $R$ such that $|R\cap S_j|\in [2^{k-1},2^k)$.

Suppose $R\in\mathcal{R}_{j,k}$, and write $B_R=\bigcup_{s\in R\cap S_j} \partial Q(s,r(s))$. The key of the proof is to obtain a good lower estimate for $|B_R|$, which we can do thanks to the One-Dimensional Main Lemma. Indeed, let $A'_R, A''_R$ be the set of $x,y$ coordinates of the sides of $\partial Q(s,r(s))$ for $s\in R\cap S_j$. Then the set $A_R=A'_R\cup A''_R$ has the property that for each $s=(x,y)\in R\cap S_j$ there is $r=r(s)$ such that $x-r,x+r,y-r,y+r\in A_R$. Hence the One-Dimensional Main Lemma yields that $|A_R|\ge |R\cap S_j|^{3/8}/\sqrt{2}$. On the other hand, $B_R$ contains either $|A_R|/2$ disjoint vertical segments or $|A_R|/2$ disjoint horizontal segments of length $2^{j-1}$. Therefore
\[
|B_R| \ge 2^{j-1}  \frac{|R\cap S_j|^{3/8}}{2\sqrt{2}}  \ge \Omega(1) 2^j 2^{3k/8}.
\]

We note that when $\mathcal{R}_{j,k}$ is nonempty, we have the trivial estimates $2^{k-1} \le |R\cap S_j| \le |R_j|  = 2^{2j}$, and $|S_j| \ge |R\cap S_j|
\ge 2^{k-1}$. Also,
\[
\sum_k 2^k  |\mathcal{R}_{j,k}| \ge |S_j|.
\]
Since, for fixed $j$, each point in $\Z^2$ belongs to at most $9$ discrete square boundaries with centers in different squares of the partition and side length at most $2^j$, in estimating $|B_j|$ via $\sum_R |B_R|$ we are counting each point at most $9$ times, so we can estimate
\begin{align*}
|B_j| &\ge \Omega(1)\sum_k |\mathcal{R}_{j,k}|  2^j 2^{3k/8}\\
&\ge \Omega(1)\sum_k (2^k|\mathcal{R}_{j,k}|)  2^{k/2} (2^{-k} 2^{3k/8})\\
&\ge  \Omega(1)\sum_k (2^k|\mathcal{R}_{j,k}|) 2^{-k/8}\\
&\ge \Omega(1) \sum_k (2^k|\mathcal{R}_{j,k}|) |S_j|^{-1/8} \\
&\ge \Omega(|S_j|^{7/8}).
\end{align*}

\ep

\subsection{Constructions}

Next, we show the sharpness (up to a $\log$ factor in the case of Theorem \ref{t:discrete-squares}) of the discrete estimates we have established so far. They are all based on the construction given in the following lemma.
This construction was found independently by Bertalan Bodor,  Andr\'as M\'esz\'aros, Istv\'an Tomon, and the second author at the Mikl\'os Schweitzer Mathematical Competition
in 2012, where the first and the third authors posed a problem related to the One-Dimensional Main Lemma (Lemma~\ref{l:main1D}).

\begin{lemma}\label{l:D_k}
For any $k=1,2,\ldots$ there exists a set $D_k\su\{-k^4, -k^4+1, \dots, 2k^4\}$ such that $|D_k|\leq O(k^3)$ and
\begin{equation}\label{everything}
\forall x,y\in\{0,1,\ldots,k^4-1\}\ \exists r\in\{1,\ldots, k^4\} \ :\
x-r,x+r,y-r,y+r\in D_k.
\end{equation}
\end{lemma}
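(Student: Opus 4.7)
The approach is an explicit two-level construction based on the base-$k^2$ decomposition. Every integer $x \in \{0,\ldots,k^4-1\}$ is uniquely written as $x = x_h k^2 + x_l$ with $x_h, x_l \in \{0,\ldots,k^2-1\}$, and analogously for $y$ and for a target radius $r = r_h k^2 + r_l$. The plan is first to construct an auxiliary set $E \subseteq \{-k^2,\ldots,2k^2-1\}$ of cardinality $O(k^{3/2})$ with the reduced-scale property: for every $u,v \in \{0,\ldots,k^2-1\}$ there is an $s > 0$ with $u \pm s, v \pm s \in E$. The target set is then
\[
D_k := \{ a + b k^2 : a, b \in E \},
\]
which has $|D_k| \leq |E|^2 = O(k^3)$ and lies in $\{-k^4,\ldots,2k^4-1\}$ up to a constant-factor boundary adjustment.

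To verify \eqref{everything}, given $(x,y) \in \{0,\ldots,k^4-1\}^2$, apply the property of $E$ twice: once to $(x_l, y_l)$ to obtain a low-scale radius $r_l > 0$ with $x_l \pm r_l, y_l \pm r_l \in E$, and once to $(x_h, y_h)$ to obtain a high-scale radius $r_h$ with $x_h \pm r_h, y_h \pm r_h \in E$. Setting $r := r_h k^2 + r_l$, each of the four values $x \pm r, y \pm r$ decomposes as $(x_h \pm r_h) k^2 + (x_l \pm r_l)$ (and similarly for $y$), so both of its base-$k^2$ digits lie in $E$, placing it in $D_k$. Note that building $E$ is itself an instance of the lemma at the smaller scale $k^2$; iterating the two-level trick once more (or taking $k^2$ as the base scale and running the same reduction one extra time) brings it to a problem at scale $k$, which is handled by a direct construction, for example as a union of $\Theta(\sqrt{k})$ arithmetic progressions of length $\Theta(\sqrt{k})$ whose offsets are chosen so that every pair in $\{0,\ldots,k-1\}^2$ is realized as a pair of midpoints of equal-length sub-progressions from the union.

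The main obstacle is twofold. First, carries across levels: if $x_l + r_l \geq k^2$ or $x_l - r_l < 0$, then $x \pm r$ picks up an extra $\pm 1$ in its high digit. I plan to absorb this by using the enlarged set $E \cup (E+1) \cup (E-1)$ in the high-digit role (still of size $O(k^{3/2})$), so that a $\pm 1$ shift of $x_h \pm r_h$ is still in the allowed digit set, or alternatively by restricting the choice of $r_l$ within $E$ to avoid overflow. Second, the implicit constants get squared each time the two-level reduction is applied, so the number of recursive levels must be bounded; this is not a real issue because only $\log_2 4 = 2$ levels are needed to go from scale $k^4$ down to scale $k$, after which a direct construction with an explicit constant handles the base case, keeping the final constant in $|D_k| \leq Ck^3$ absolute.
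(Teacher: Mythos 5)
Your digit-splitting reduction is structurally sound as far as it goes (and, incidentally, the carry problem you worry about is not really there: since you define $D_k$ as the sumset $E+k^2E$ rather than via canonical base-$k^2$ representations, $x+r=(x_h+r_h)k^2+(x_l+r_l)$ lies in $D_k$ as soon as both bracketed quantities lie in $E$, regardless of their size). The genuine gap is the base case, and it is fatal as stated. Your recursion gives $|D_k|=|E|^2$, and more generally, if $g(N)$ denotes the size of a set covering all pairs in $\{0,\ldots,N-1\}^2$, the reduction gives $g(N)\le g(\sqrt{N})^2$. This recursion is exponent-neutral: if the base case achieves $g(M)=O(M^{\alpha})$, the top level achieves only $g(k^4)=O(k^{4\alpha})$. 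To reach $|D_k|=O(k^3)$ you therefore need $\alpha=3/4$ already at the base, i.e.\ a set of size $O(k^{3/4})$ covering $\{0,\ldots,k-1\}^2$ --- which is exactly the lemma again (at parameter $k^{1/4}$), and by the One-Dimensional Main Lemma this exponent cannot be improved, so no genuinely easier ``direct construction'' is available there. Your proposed base case, a union of $\Theta(\sqrt{k})$ progressions of length $\Theta(\sqrt{k})$, has size $\Theta(k)$, i.e.\ $\alpha=1$; fed into the recursion it yields only $|D_k|=\Theta(k^4)$, the trivial bound. In short, the two-level trick reduces the lemma to itself and all the content has been pushed into a base case that is left unproved.

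The paper obtains the saving from a mechanism your proposal lacks: the digit set is a \emph{union}, not a product. Write numbers with four base-$k$ digits taken from the extended range $\{-k+1,\ldots,2k-2\}$ (so carries never arise), and let $D_k$ be the numbers representable with \emph{at least one digit equal to $0$}; this has size $(3k-2)^4-(3k-3)^4=O(k^3)$, whereas any product of digit sets large enough to cover each coordinate would have size $\Omega(k^4)$. The radius $r=x_0-x_1k+y_2k^2-y_3k^3$ is then tailored so that each of the four points $x-r$, $x+r$, $y-r$, $y+r$ acquires a zero in a \emph{different} digit position. Matching the four target points to the four digit positions is the key idea; any repair of your approach must introduce something playing this role, since a purely multiplicative recursion over digit positions can never beat the exponent of its base case.
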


\bp
Let
$$
D_k=\Big\{a+bk+ck^2+dk^3\ :\ a,b,c,d\in\{-k+1, -k+2, \ldots, 2k-2\},~ abcd=0
     \Big\}.
$$
Then clearly $D_k\su\{-k^4, -k^4+1, \dots 2k^4\}$ and
$|D_k|\leq (3k-2)^4-(3k-3)^4=O(k^3)$, so we need to prove only
(\ref{everything}). Let $0\leq x,y\leq k^4-1$. Write them as
$$x=x_0+x_1k+x_2k^2+x_3k^3$$
$$y=y_0+y_1k+y_2k^2+y_3k^3$$
where the coefficients $x_i,y_i\in \{0,1,\ldots,k-1\}$. Let
$$r=x_0-x_1k+y_2k^2-y_3k^3.$$
Then $x-r,x+r,y-r,y+r\in D_k$ holds, since all of these numbers can be expressed as $a+bk+ck^2+dk^3$ such that the coefficients are integers between $-k+1$ and $2k-2$, and at least one of them is 0.
\ep

In the rest of the section, $D_k$ is the set from the previous lemma.

\begin{remark}
This construction shows that the One-Dimensional Main Lemma is sharp (up to a constant multiple). To see this, let $A=D_k$ and $S=\{1,2,\dots, k^4-1\}^2$. Then $|A|\leq O(k^3)$ and $|S|=k^8\geq \Omega(|A|^{\frac{8}{3}})$. Interpolating between consecutive values of $k$ (using that $(k+1)^8 = O(k^8)$) we obtain sets $S$ of arbitrary cardinality.
\end{remark}

\bp[Proof of Theorem \ref{t:discrete-vertices}]
We only have to prove the sharpness (up to constant multiple) of the Two-Dimensional Main Lemma. We can take
$B=D_k\times D_k$ and $S=\{1,2,\dots, k^4-1\}^2$. Then $|B|\leq O(k^6)$ and $|S|=k^8\geq \Omega(|B|^{\frac{4}{3}})$. Again, the cardinality of $S$ is arbitrary since we can interpolate between values of $k$.
\ep

\bp[Proof of Theorem \ref{t:discrete-squares}]
We only have to prove the second part. For this, we take $B=(D_k\times \{-k^4, -k^4+1, \dots 2k^4\})\cup (\{-k^4, -k^4+1, \dots 2k^4\}\times D_k)$ and $S=\{1,2,\dots, k^4-1\}^2$. Then there is a discrete square boundary in $B$ centered at all points of $S$, and $|B|\leq O(k^7)=O(|S|^{\frac{7}{8}})$.
\ep

\section{Box and packing dimension estimates}
\label{sec:estimates}

In this section we establish the estimates concerning packing and box counting dimensions in Theorems \ref{t:packingbox-arbitrary-centers} and \ref{t:vertices}. The examples illustrating the sharpness of these estimates are given in the next section.

\bp[Proof of Theorem \ref{t:vertices}(b)]
The statement for upper and lower box dimension is a routine deduction from the
Two-Dimensional Main Lemma (Lemma~\ref{l:main2D}). For completeness, we sketch the argument. Let $B$ and $S$ be as in the statement of the theorem, and fix $k\in\N$. Given $x\in \R^2$, let $x_k$ be the center of the half-open dyadic square of size $2^{-k}$ that contains $x$, and write $S_k=\{ x_k:x\in S\}$. Without loss of generality $B$ is the union of vertices of squares with centers in $S$. Let $S_k$ be the set obtained by replacing centers $x$ by $x_k$, and side lengths $r$ by $r_k$, the closest point to $r$ of the form $2^{-k}j, j\in\Z$ (if there are two, pick the leftmost one). Note that the new vertices are at distance $O(2^{-k})$ from the old ones, so $B$ hits $\Omega(|B_k|)$ dyadic squares of size $2^{-k}$. But it follows from the Two-Dimensional Main Lemma that $|S_k|\le 2|B_k|^{\tfrac43}$, so this gives the claim for upper and lower box dimensions.

For packing dimension, we use the well known fact (see \cite[Proposition 3.8]{Falconer03}) that
packing dimension is the same as the modified box dimension; that is,
$$
\pdim(H)=\mubdim(H)=
\inf\left\{\sup_i \ubdim(H_i)\ :\ H\su \cup_{i=1}^\infty H_i \right\}
\quad (\forall H\su\R^d, d\in\N).
$$
So let $B\su \cup_{i=1}^\infty B_i$. We need to show that
$\sup_i \ubdim(B_i)\ge \frac34\pdim S$.

Let $B'_i=\cup_{j=1}^i B_j$ and let $S_i$ consist of those points of $S$ which are centers of squares with all four vertices in $B'_i$.
By the already proved upper box dimension part of this theorem, we have
$\ubdim B_i'\ge \frac34\ubdim S_i$.
Since every point of $S$ is the center of square with all vertices in $B$, $\cup_{i=i}^\infty B_i'=B$ and $B'_1\su B'_2\su\ldots$,
we get that $S=\cup_{i=1}^\infty S_i$.
Then for every $\eps>0$ there exists an $i$ such that
$\ubdim S_i > \mubdim S - \eps = \pdim S - \eps$. Thus for this $i$ we have
$\ubdim B'_i\ge \frac34\ubdim S_i > \frac34\pdim S - \frac34 \eps$.
Since $B'_i$ is a finite union of $B_j$-s and the upper box dimension is
finitely stable (\cite[Section 3.2]{Falconer03}), this implies the existence of a
$j$ such that $\ubdim B_j > \frac34\pdim S - \frac34 \eps$, which
completes the proof.
\ep

The following is a dimension analog of the One-Dimensional Main Lemma.

\begin{prop}\label{p:dimensions-1Dcenters}
If $A$ is a set in the real line, $S$ is a set in the plane and for every
$(x,y)\in S$ there exists $r$ such that $x+r, x-r, y+r, y-r \in A$
then $\dim A \ge \frac38 \dim S$, where $\dim$ is lower or upper box dimension
or packing dimension.
\end{prop}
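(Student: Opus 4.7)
The plan is to mimic the strategy used for Theorem \ref{t:vertices}(b), but with the One-Dimensional Main Lemma (Lemma \ref{l:main1D}) playing the role of the Two-Dimensional Main Lemma, and to then bootstrap the box dimension bound to packing dimension using the modified upper box dimension characterization of $\pdim$.

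For the box dimension statements, I would discretize at dyadic scale $2^{-k}$. Let $A_k \subset \Z$ index the dyadic intervals of length $2^{-k}$ that meet $A$, and let $S_k \subset \Z^2$ index the dyadic squares of side $2^{-k}$ that meet $S$, so that $|A_k| = N(A,2^{-k})$ and $|S_k| \asymp N(S,2^{-k})$. Given $(x,y) \in S$ with witness $r$, set $i = \lfloor 2^k x\rfloor$, $j = \lfloor 2^k y\rfloor$ and $m = \lfloor 2^k r\rfloor$. A direct estimate of the fractional parts shows that each of $\lfloor 2^k(x\pm r)\rfloor$ and $\lfloor 2^k(y\pm r)\rfloor$ differs from $i\pm m$, respectively $j\pm m$, by at most one. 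Consequently, if I enlarge $A_k$ to
\[
\tilde A_k = \bigcup_{c\in\{-1,0,1\}} (A_k + c),
\]
then $|\tilde A_k| \le 3|A_k|$ and for every $(i,j)\in S_k$ there exists an integer $m$ with $i\pm m,\, j\pm m \in \tilde A_k$. The One-Dimensional Main Lemma applied to $\tilde A_k$ and $S_k$ yields $|S_k| \le 2^{4/3} |\tilde A_k|^{8/3} \le C |A_k|^{8/3}$. Taking $\limsup_{k\to\infty}$ and $\liminf_{k\to\infty}$ of $\frac{\log|S_k|}{k\log 2}$ gives $\ubdim S \le \tfrac83 \ubdim A$ and $\lbdim S \le \tfrac83 \lbdim A$, i.e. the desired $\dim A \ge \tfrac38 \dim S$ for upper and lower box dimension.

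For the packing dimension statement, I follow the argument at the end of the proof of Theorem \ref{t:vertices}(b). Using that $\pdim = \mubdim$, write $A\subset \bigcup_{i=1}^\infty A_i$ and set $A'_i = \bigcup_{j=1}^i A_j$. Let
\[
S_i = \{(x,y)\in S : \exists\, r\text{ such that } x\pm r,\, y\pm r \in A'_i\}.
\]
Then $S = \bigcup_i S_i$ and $S_1 \subset S_2 \subset \cdots$, so for any $\eps>0$ I can choose $i$ with $\ubdim S_i > \pdim S - \eps$. By the already established upper box dimension case, $\ubdim A'_i \ge \tfrac38 \ubdim S_i > \tfrac38(\pdim S - \eps)$. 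Since $\ubdim$ is finitely stable and $A'_i$ is a finite union of the $A_j$, some $A_j$ with $j \le i$ satisfies $\ubdim A_j > \tfrac38 \pdim S - \tfrac38\eps$. Taking the infimum over all countable covers, $\pdim A \ge \tfrac38 \pdim S$.

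The only real technical point is the discretization step, where one must ensure that the discretized witnesses still fit the hypothesis of the One-Dimensional Main Lemma; the small enlargement $\tilde A_k$ of $A_k$ handles this without affecting the dimension count (since $|\tilde A_k| \le 3|A_k|$). Everything else is a routine translation from cardinality to box and packing dimension.
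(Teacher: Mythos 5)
Your proof is correct and is essentially the paper's intended argument: the paper's proof of this proposition simply says to repeat the proof of Theorem~\ref{t:vertices}(b) verbatim with the One-Dimensional Main Lemma in place of the Two-Dimensional one, which is exactly what you do for both the box-dimension discretization and the modified-box-dimension bootstrap to packing dimension. Your $\pm 1$ enlargement $\tilde A_k$ is just a slightly different, equally routine, way of handling the rounding errors compared with the paper's device of snapping centers and side lengths to the dyadic grid.
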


\bp
The proof follows exactly the same argument as the proof of Theorem~\ref{t:vertices}(b), except that we appeal to the One-Dimensional Main Lemma instead.
\ep

\begin{corollary}\label{c:dimensions-1Dcenters}
If $A$ is a subset of the real line such that for any
$(x,y)\in [0,1]\times[0,1]$
there exists $r$ such that $x+r, x-r, y+r, y-r \in A$
then the lower box dimension, upper box dimension and packing dimension
of $B$ are at least $\frac34$.
\end{corollary}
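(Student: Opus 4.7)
The plan is to observe that this corollary is an immediate specialization of Proposition \ref{p:dimensions-1Dcenters} to the case $S=[0,1]\times[0,1]$. (Note that although the statement of the corollary writes "$B$," this is a typo for $A$, since $B$ is not defined in the hypothesis; the conclusion is about the dimension of the set $A$ on the real line.)

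Concretely, I would proceed as follows. First, the hypothesis of the corollary is precisely the hypothesis of Proposition \ref{p:dimensions-1Dcenters} with the choice $S=[0,1]\times[0,1]$: for every $(x,y)\in S$ there exists $r$ such that $x+r,x-r,y+r,y-r\in A$. Applying the proposition, we obtain
\[
\dim A \;\ge\; \tfrac{3}{8}\dim S
\]
for each of the three notions $\dim\in\{\lbdim,\ubdim,\pdim\}$.

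Next, I note that the unit square $[0,1]^2$ has dimension $2$ in all three senses: since $[0,1]^2$ is a set of positive Lebesgue measure in $\R^2$, its Hausdorff, packing, and lower/upper box dimensions all equal $2$ (using the standard chain $\hdim\le\lbdim\le\ubdim\le 2$ together with $\hdim\le\pdim\le\ubdim\le 2$, which are recalled in Section \ref{sec:preliminaries}, and the obvious lower bound $\hdim[0,1]^2=2$). Substituting $\dim S=2$ into the bound from the proposition yields $\dim A\ge \tfrac{3}{8}\cdot 2=\tfrac{3}{4}$ for each of the three notions of dimension, which is the desired conclusion.

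There is no real obstacle here; the corollary is essentially just a convenient restatement of Proposition \ref{p:dimensions-1Dcenters} in the special case of a full square of centers, recorded for later reference.
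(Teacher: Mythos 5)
Your proposal is correct and is exactly the intended argument: the paper treats this corollary as an immediate consequence of Proposition \ref{p:dimensions-1Dcenters} applied with $S=[0,1]^2$, whose dimension is $2$ in all three senses, and you also correctly identify that the ``$B$'' in the statement is a typo for $A$.
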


\bp[Proof of Theorem \ref{t:packingbox-arbitrary-centers}(a),(b)]
Part (a) follows directly from Theorem~\ref{t:discrete-squares}(a) (as in the proof of Theorem~\ref{t:vertices}(b)), and
from the fact that a square has box dimension $1$.

To prove (b), let
$$
B'=B+\big( (\{0\}\times \Q\big) \cup \big( \Q\times \{0\})\big) =
\left(\bigcup_{r\in\Q} B+(0,r)\right) \cup \left(\bigcup_{r\in\Q} B+(r,0)\right).
$$
Then $\pdim B'=\pdim B$. Moreover, since $B'$ contains the whole lines containing the sides of the squares that make up $B$, we have
$B'=(A_1\times\R) \cup (\R\times A_2)$, where for
every $(x,y)\in S$ there exists $r$ such that $x-r,x+r\in A_1$ and
$y-r,y+r\in A_2$.

Thus Proposition~\ref{p:dimensions-1Dcenters} can be applied
to $A=A_1\cup A_2$ and $S$, so we get $\pdim(A_1\cup A_2)\ge\frac38\pdim S$.
Therefore, either $\pdim A_1 \ge \frac38\pdim S $ or $\pdim A_2 \ge \frac38\pdim S $,
hence we conclude
$$
\pdim B = \pdim B'=
\pdim((A_1\times\R) \cup (\R\times A_2))\ge 1+\frac38\pdim S.
$$
\ep

Note that taking $s=2$ we also obtain the first part of Theorem \ref{t:packingbox-all-centers}.

\section{Constructions for box and packing dimensions}\label{sec:constructions}
\subsection{Cantor type constructions: packing dimension and the vertices problem}

Our basic construction will be obtained as an infinite sum of scaled
copies of the discrete examples. Hence first we need to calculate
the dimensions of these type of sets. This is standard, but we provide the proof for completeness as we have not been able to find these exact statements in the literature.

\begin{lemma}\label{l:Falconer}
Suppose that for each $i\in\N$ we have a finite set $Q_i$ such that
$\diam Q_i\le d_i$, $Q_i$ is $\de_i$-separated
($x,y\in Q_i,x\neq y\Rightarrow |x-y|\ge\de_i$),
$|Q_i|=l_i$, $\sum_{i=1}^\infty \min Q_i>-\infty$ and
$\sum_{i=1}^\infty \max Q_i<\infty$. Let
$$
P=Q_1+Q_2+\ldots=\left\{\sum_{i=1}^\infty q_i\ :\ q_i\in Q_i\right\}.
$$

(a) If for for some $c<1$ we have $d_{i}\le cd_{i-1}$ for every $i\in\N$ then
$$
\ubdim P\le\limsup_{j\to\infty}
    \frac{\log(l_1\cdots l_{j})}
         {-\log d_{j}}.
$$

(b) If $d_i+\de_i\le \de_{i-1}$ for every $i\in\N$ then
$$
\hdim P\ge\liminf_{j\to\infty}
    \frac{\log(l_1\cdots l_{j})}
         {-\log(l_{j+1} \de_{j+1})}.
$$
%
\end{lemma}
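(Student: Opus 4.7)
The plan is to introduce the \emph{level-$j$ branches}
\[
B_{\vec q}=q_1+q_2+\cdots+q_j+T_{>j},\qquad \vec q=(q_1,\ldots,q_j)\in Q_1\times\cdots\times Q_j,
\]
where $T_{>j}=Q_{j+1}+Q_{j+2}+\cdots$ is the tail. There are $l_1 l_2\cdots l_j$ branches at level $j$, the set $P$ is their union, and each is contained in an interval of length at most $\sum_{i>j}d_i$. Both parts of the lemma then reduce to estimating this tail length and controlling the geometric arrangement of the branches.

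For part (a), the assumption $d_i\le c\,d_{i-1}$ yields $\sum_{i>j}d_i\le d_{j+1}/(1-c)$ by geometric summation, hence $N\bigl(P,d_{j+1}/(1-c)\bigr)\le l_1\cdots l_j$. For $\eps\in[d_{j+1}/(1-c),\,d_j/(1-c))$, monotonicity of $N(P,\cdot)$ preserves the bound while $-\log\eps>-\log d_j+\log(1-c)$. The additive constant $\log(1-c)$ is absorbed as $-\log d_j\to\infty$, so taking the $\limsup$ over small $\eps$ gives $\ubdim P\le\limsup_{j\to\infty}\log(l_1\cdots l_j)/(-\log d_j)$.

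For part (b) I would construct a Borel probability measure $\mu$ on $P$ as the infinite convolution of the uniform probability measures on the $Q_i$; equivalently, $\mu$ assigns mass $1/(l_1\cdots l_j)$ to each level-$j$ branch. The hypothesis $d_i+\delta_i\le\delta_{i-1}$ telescopes to $\sum_{i>j}d_i\le\delta_j$, so every branch has diameter $\le\delta_j$; moreover, if $\vec q\neq\vec q'$ at level $j$ and $k$ is the first coordinate where they differ, the partial sums satisfy
\[
\left|\sum_{i\le j}q_i-\sum_{i\le j}q_i'\right|\ge \delta_k-\sum_{k<i\le j}d_i\ge\delta_k-(\delta_k-\delta_j)=\delta_j,
\]
so distinct level-$j$ branches have pairwise disjoint interiors.

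To apply Frostman's mass distribution principle I would exploit the intermediate scale $l_{j+1}\delta_{j+1}$, which satisfies $l_{j+1}\delta_{j+1}\le d_{j+1}+\delta_{j+1}\le\delta_j$ (since $Q_{j+1}$ has $l_{j+1}$ points that are $\delta_{j+1}$-separated within diameter $\le d_{j+1}$). Given $r$ small, pick the unique $j$ with $\delta_{j+1}<r\le\delta_j$: in the range $r\in(l_{j+1}\delta_{j+1},\delta_j]$ the ball $B(x,r)$ meets only $O(1)$ level-$j$ branches, yielding $\mu(B(x,r))\le O(1/(l_1\cdots l_j))$; in the range $r\in(\delta_{j+1},l_{j+1}\delta_{j+1}]$ it meets $O(r/\delta_{j+1})$ level-$(j+1)$ sub-branches inside a single parent, yielding $\mu(B(x,r))\le O(r)/(\delta_{j+1}l_1\cdots l_{j+1})$. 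A short computation shows that both bounds give $\mu(B(x,r))\le C\,r^s$ whenever $s<\liminf_j\log(l_1\cdots l_j)/(-\log(l_{j+1}\delta_{j+1}))$, so the mass distribution principle yields $\hdim P\ge s$, and letting $s$ tend to the $\liminf$ finishes the proof. I expect the main obstacle to be the clean bookkeeping across the two scale regimes in the Frostman estimate, which hinges on the telescoping identity afforded by the separation hypothesis.
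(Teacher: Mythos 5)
Your proposal is correct and follows essentially the same route as the paper: part (a) is the same covering count of the $l_1\cdots l_j$ level-$j$ branches at scale comparable to $d_j$, and part (b) uses the same infinite product/convolution measure together with the same two-scale Frostman estimate (the paper packages your two regimes into the single bound $\mu(U)\le \frac{2}{l_1\cdots l_{j+1}}\min(l_{j+1},u/\de_{j+1})$ and then applies $\min(a,b)\le a^{1-s}b^s$, which is exactly your case split). Your telescoping separation argument for the partial sums is the same mechanism the paper uses to show the branches are ordered with $\de_j$-separated leftmost points, so the bookkeeping you anticipated goes through as planned.
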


\bp
The conditions $\sum_{i=1}^\infty \min Q_i>-\infty$ and
$\sum_{i=1}^\infty \max Q_i<\infty$ imply that the definition of $P$
makes sense and $P\su\R$ is bounded.
We will use the notation
$$
P_{q_1,\ldots,q_i}=q_1+\ldots+q_i+Q_{i+1}+Q_{i+2}+\ldots \quad
(q_1\in Q_1,\ldots,q_i\in Q_i).
$$

(a)
The condition $d_{i}\le cd_{i-1}$ implies that for some constant $C$ we
have $\sum_{j=i}^\infty d_j\le C d_i$ for any $i\in\N$.

Let $0<\de<\sum_{j=1}^\infty d_j$ be given. Choose $i\in\N$ so that
$\sum_{j=i+1}^\infty d_j < \de \le \sum_{j=i}^\infty d_j$.
Since $P$ is the union of $l_1\cdots l_i$ sets
of the form $P_{q_1,\ldots,q_i}$
($q_1\in Q_1,\ldots,q_i\in Q_i$) and
$\diam P_{q_1,\ldots,q_i}=\sum_{j=i+1}^\infty d_j < \de$,
we get that the minimal number of sets of diameter at most $\de$
that can cover $P$ is $N_{\de}\le l_1\cdots l_i$.
Thus
$$
\frac{\log N_{\de}}{-\log\de}\le
\frac{\log(l_1\cdots l_{i})}{-\log(\sum_{j=i}^\infty d_j)} \le
\frac{\log(l_1\cdots l_{i})}{-\log(C d_i)}.
$$
Therefore
$$
\ubdim P = \limsup_{\de\to 0}\frac{\log N_{\de}}{-\log\de}\le
   \limsup_{i\to\infty}  \frac{\log(l_1\cdots l_{i})}{-\log(C d_i)} =
\limsup_{i\to\infty} \frac{\log(l_1\cdots l_{i})}{-\log d_i}.
$$

(b) The proof is almost identical to the one in \cite[Example 4.6]{Falconer03}.
For each $i\in\N$ let $\mu_i$ be the equally distributed probability
measure on $Q_i$ and let $\mu$ be the product of these measures.
Thus
$$
\mu(P_{q_1,\ldots,q_i})=\frac1{l_1\cdots l_i} \quad
(q_1\in Q_1,\ldots,q_i\in Q_i).
$$

By translating each $Q_i$ we can move the minimum point of every $Q_i$
to $0$, so we can suppose that $\min Q_i=0$ and so $Q_i\su[0,d_i]$
for each $i$.

From the condition $d_i+\de_i\le\de_{i-1}$ by induction we get
$d_{i+1}+\ldots +d_{i+j}+\de_{i+j}\le \de_i$, so $d_{i+1}+d_{i+2}+\ldots \le \de_i$.
Hence $Q_{i+1}+Q_{i+2}+\ldots\su [0,\de_i)$.
This implies that the points
$\sum_{i=1}^\infty q_i$ ($q_i\in Q_i$) of $P$ are ordered
lexicographically; that is, $q_1=q'_1,\ldots,q_i=q_i',q_{i+1}<q'_{i+1}$
implies $\sum_{i=1}^\infty q_i<\sum_{i=1}^\infty q'_i$, and also that
for any fixed $i$, $P_{q_1,\ldots,q_i}$
($q_1\in Q_1,\ldots,q_i\in Q_i$) are pairwise disjoint sets of
diameter at most $\de_i$ and the
set of their leftmost points
is $\de_i$-separated.
This implies that an interval of length $h$ can intersect
at most $\floor{\frac{h}{\de_i}}+1$ of the form  $P_{q_1,\ldots,q_i}$.

Now let $U$ be an arbitrary subset of $\R$ with $\diam U=u<\de_1$.
By the mass distribution principle (see \cite[Mass Distribution Principle 4.2]{Falconer03}) it is enough
to show that $\mu(U)/u^s$ is bounded above by a constant if $s$ is less than
the righthand-side of the claimed inequality of (b).
Choose $j$ so that $\de_{j+1}\le u < \de_j$. By the last observation of
the previous paragraph, $U$ can intersect at most two sets of form
$P_{q_1,\ldots,q_{j}}$ and at most
$\lfloor\frac{u}{\de_{j+1}}\rfloor+1\le 2\frac{u}{\de_{j+1}}$ sets of form
$P_{q_1,\ldots,q_{j+1}}$.

This implies that
$$
\mu(U)\le\min\left(\frac2{l_1\cdots l_{j}},
                   \frac{2u/\de_{j+1}}{l_1\cdots l_{j+1}}\right)=
\frac{2}{l_1\cdots l_{j+1}}\min\left(l_{j+1},\frac{u}{\de_{j+1}}\right).
$$
Let $0<s<1$. Since $\min(l_{j+1},u/\de_{j+1})\le l_{j+1}^{1-s} (u/\de_{j+1})^s$
we get that
$$
\frac{\mu(U)}{u^s}\le\frac{2(l_{j+1} \de_{j+1})^{-s}}{l_1\cdots l_{j}},
$$
which is bounded above by a constant provided that
$$
s<\liminf_{j\to\infty}
    \frac{\log(l_1\cdots l_{j})}
         {-\log(l_{j+1} \de_{j+1} )}.
$$
\ep

The followig theorem completes the proof of Theorem \ref{t:vertices}, and also shows that Proposition~\ref{p:dimensions-1Dcenters}
and Corollary~\ref{c:dimensions-1Dcenters} are sharp.

\begin{theorem}\label{t:4examples}
(a) For any $s\in[0,2]$ there exist compact sets $B,S\su\R^2$
such that $\hdim(S)=\bdim(S)=\pdim(S)=s$,
$\pdim(B)=\bdim(B)=\frac{3s}{4}$ and
 every point of $S$ is the center of a square with all vertices in $B$.

(b) There exists a compact set $B\su\R^2$ such that
$\pdim(B)=\bdim(B)=\frac{3}{2}$ and
 every point of $[0,1]\times[0,1]$ is the center of a square with all vertices in $B$.

(c) For any $s\in[0,1]$ there exist compact sets $A\su\R$ and
$S\su\R^2$
such that $\hdim(S)=\bdim(S)=\pdim(S)=s$,
$\bdim(A)=\pdim(A)=\frac{3s}{8}$ and
for every $(x,y)\in S$ there exists $r$
such that $x-r,x+r,y-r,y+r\in A$.

(d) There exists a compact set $A\su\R$ such that
$\bdim(A)=\pdim(A)=\frac{3}{4}$ and
for every $(x,y)\in [0,1]\times[0,1]$ there exists $r$
such that $x-r,x+r,y-r,y+r\in A$.
\end{theorem}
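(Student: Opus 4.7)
My plan is to handle all four parts by a single Cantor-type construction built from the universal finite set $D_k$ of Lemma \ref{l:D_k}, and to derive the two-dimensional statements (a),(b) from the one-dimensional statements (c),(d) by taking products. Indeed, if $A\subset\R$ has the property that every $(x,y)\in S$ admits some $r$ with $x\pm r, y\pm r\in A$, then $B:=A\times A$ contains the four vertices $(x\pm r, y\pm r)$ of a square centered at each such $(x,y)$; for the self-similar $A$ constructed below one has $\bdim(A\times A)=2\bdim A$ and $\pdim(A\times A)=2\pdim A$, so (a) and (b) follow from (c) and (d) (with the same $S$).

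For the main construction, fix a large integer $k$ and, given $s\in(0,2]$, set $\lambda=k^{-8/s}$. Define compact sets
\[
A \;=\; \sum_{i=1}^\infty \lambda^i D_k, \qquad T \;=\; \sum_{i=1}^\infty \lambda^i \{0,1,\ldots,k^4-1\}, \qquad S \;=\; T\times T
\]
(Minkowski sums, realized as continuous images of $D_k^{\N}$ and $\{0,\ldots,k^4-1\}^{\N}$). When $s=2$, $\lambda=1/k^4$ forces $T=[0,1]$ and $S=[0,1]^2$, the setup of (b) and (d). For $s<2$, $\lambda<1/(3k^4)$, so $T$ is a self-similar Cantor set obeying strong separation, with $\hdim T=\pdim T=\bdim T=4\log k/\log(1/\lambda)=s/2$, and hence $\hdim S=\pdim S=\bdim S=s$.

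To verify the center property, given $(x,y)\in S$ (resp.\ $(x,y)\in[0,1]^2$ when $s=2$), expand $x=\sum_i x_i\lambda^i$ and $y=\sum_i y_i\lambda^i$ with digits $x_i,y_i\in\{0,\ldots,k^4-1\}$. Lemma \ref{l:D_k} yields for each $i$ a radius $r_i\in\{1,\ldots,k^4\}$ such that $x_i\pm r_i,\,y_i\pm r_i\in D_k$; setting $r=\sum_i r_i\lambda^i$, one reads off $x\pm r=\sum_i(x_i\pm r_i)\lambda^i\in A$ and analogously for $y$. For the dimension of $A$, apply Lemma \ref{l:Falconer} with $Q_i=\lambda^i D_k$, $l_i=\Theta(k^3)$, $d_i=O(\lambda^i k^4)$, $\delta_i=\lambda^i$: both the upper bound of (a) and the lower bound of (b) of that lemma return $3\log k/\log(1/\lambda)=3s/8$, and combined with the sandwich $\hdim\le\min(\lbdim,\pdim)\le\ubdim$ this forces $\hdim A=\pdim A=\bdim A=3s/8$.

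The delicate point is the borderline case $s=2$ (part (d)), where $\lambda=1/k^4$ fails the separation hypothesis $d_i+\delta_i\le\delta_{i-1}$ needed to invoke Lemma \ref{l:Falconer}(b) directly, because successive scales in the Minkowski sum overlap. I expect to handle this either by (i) performing the construction at $\lambda=1/(3k^4)$ and then dilating the resulting $A$ by the factor $3$, so that after an affine change of variables the set $S$ still contains $[0,1]^2$; or (ii) introducing a slowly vanishing perturbation $\lambda_i=(k^4(1+1/i))^{-1}$, which restores the separation hypothesis while preserving the limiting ratio in Lemma \ref{l:Falconer}. Beyond this, the argument is routine bookkeeping with self-similar Cantor sets.
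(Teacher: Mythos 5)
There is a genuine gap, and it is in the dimension count for $A$, not in the borderline case you worry about. With a \emph{fixed} $k$, the self-similar set $A=\sum_{i}\lambda^iD_k$ (with strong separation) has box and packing dimension exactly $\frac{\log|D_k|}{\log(1/\lambda)}$. Lemma \ref{l:D_k} only gives $|D_k|=\Theta(k^3)$ with a multiplicative constant strictly larger than $1$ (indeed $|D_k|\approx(3k-2)^4-(3k-3)^4\approx 108k^3$), so with $\lambda=k^{-8/s}$ you get
\[
\bdim A=\frac{\log|D_k|}{(8/s)\log k}=\frac{3s}{8}+\Theta\!\left(\frac{1}{\log k}\right)>\frac{3s}{8},
\]
not $\frac{3s}{8}$; your claim that Lemma \ref{l:Falconer} ``returns $3\log k/\log(1/\lambda)$'' silently discards the $O(1)$ per-level contribution $\log(l_1\cdots l_j)=j\,(3\log k+O(1))$, and that error does not vanish as $j\to\infty$. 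You cannot repair this by retuning $\lambda$, since $\bdim T=\tfrac{4\log k}{\log(1/\lambda)}$ and $\bdim A=\tfrac{\log|D_k|}{\log(1/\lambda)}$ can only be in the exact ratio $8:3$ if $\log|D_k|/\log k^4=3/4$ exactly, which fails for every fixed $k$. This is precisely why the paper does \emph{not} iterate a single $D_k$: it sets $A=\sum_{k\ge1}\frac{\beta_k}{k^4}D_k$ with $\beta_k=((k-1)!)^{-8/s}$, so that $\log(l_1\cdots l_j)=3\log(j!)+O(j)$ is divided by $\frac{8}{s}\log((j-1)!)$ and the $O(j)$ error is swamped by $\log(j!)\sim j\log j$, giving the exact limit $\tfrac{3s}{8}$.

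Two further points. First, your proposed repairs of the $s=2$ case both fail: replacing $\lambda=k^{-4}$ by $(3k^4)^{-1}$, or by $\lambda_i=(k^4(1+1/i))^{-1}$, introduces gaps into $T=\sum_i\delta_i\{0,\ldots,k^4-1\}$, so $T\ne[0,1]$ and $S=T\times T$ no longer contains $[0,1]^2$ (no affine change of variables turns a Cantor set into a set containing a square). Second, the difficulty you are trying to fix is illusory: one never needs Lemma \ref{l:Falconer}(b) (hence the separation hypothesis) for $A$ or $B$ at all. The lower bounds $\pdim B,\lbdim B\ge\frac34 s$ and $\pdim A,\lbdim A\ge\frac38 s$ come for free from Theorem \ref{t:vertices}(b) and Proposition \ref{p:dimensions-1Dcenters} once $\hdim S\ge s$ is known; only the \emph{upper} bound Lemma \ref{l:Falconer}(a) is needed for $A$, and only part (b) for $T$, where the condition $d_i+\delta_i\le\delta_{i-1}$ holds (with equality when $s=2$). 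Your reduction of (a),(b) to (c),(d) via $B=A\times A$ and the derivation of the center property from Lemma \ref{l:D_k} are fine and match the paper.
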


\bp
Statements (a), (c) for $s=0$ are trivial, so we assume $s>0$. We prove all four claims of the theorem using the same construction.
For $k=1,2\ldots$ let $D_k$ be the set we obtain from Lemma~\ref{l:D_k},
$E_k=\{0,\ldots,k^4-1\}$ and
$\be_k=((k-1)!)^{-\frac8s}$.
Let
$$
A=\frac{\be_1}{1^4}\cdot D_1 + \frac{\be_2}{2^4}\cdot D_2+\ldots =
\left\{\sum_{k=1}^\infty \frac{\be_k}{k^4}a_k\ : \ a_k\in D_{k}\right\},
$$
$$
T=\frac{\be_1}{1^4}\cdot E_1 + \frac{\be_2}{2^4}\cdot E_2+\ldots =
\left\{\sum_{k=1}^\infty \frac{\be_k}{k^4}u_k\ : \ u_k\in E_{k}\right\},
$$
and set $B=A\times A$ and $S=T\times T$.

Let $(x,y)\in S$. Then $x=\sum_{k=1}^\infty \frac{\be_k}{k^4} u_k$ and
$y=\sum_{k=1}^\infty \frac{\be_k}{k^4} v_k$ for some
$u_k,v_k\in E_k=\{0,1,\ldots,k^4-1\}$.
For each $k$, by applying (\ref{everything}) of Lemma~\ref{l:D_k} to $D_{k}$
and $(u_k,v_k)$, we get $r_k\in\{1,\dots,k^4\}$ such that
$u_k-r_k,u_k+r_k,v_k-r_k,v_k+r_k\in D_{k}$.
Now let $r=\sum_{k=1}^\infty \frac{\be_k}{k^4} r_k$. Then $0<r<\infty$ and
$x-r,x+r,y-r,y+r\in A$, so $[x-r,x+r]\times[y-r,y+r]$ is a square
centered at $(x,y)$ and vertices in $B$.

It is clear that $S$, $A$ and $B$ are compact sets.
Note that if $s=2$ then $\be_k=((k-1)!)^{-4}$,
so $\frac{\be_k}{k^4}=((k-1)!)^{-4}/k^4=(k!)^{-4}=1^{-4}\cdots k^{-4}$,
hence $T=[0,1]$
and $S=[0,1]\times[0,1]$. Therefore, to complete the proof of all four
parts of the theorem it is enough
to show that the Hausdorff, packing and box dimensions of $S$
is $s$ and the packing and box dimensions of $A$ and $B$
are $3s/8$ and $3s/4$, respectively.

First we calculate the dimensions of $T$ by applying Lemma~\ref{l:Falconer}
to $Q_k=\frac{\be_k}{k^4} E_k$, $\de_k=\frac{\be_k}{k^4}$,
$d_k=\frac{\be_k}{k^4}(k^4-1)$ and $l_k=k^4$. Then $T=Q_1+Q_2+\ldots$.
We claim that $d_k+\de_k\le \de_{k-1}$ for any $k$. Indeed,
\[
d_k+\de_k=\frac{\be_k}{k^4}(k^4-1)+\frac{\be_k}{k^4}=\be_k=((k-1)!)^{-\frac8s},
\]
and so, using $s\le 2$, we get
\[
\de_{k-1}=\frac{\be_{k-1}}{(k-1)^4}=\frac{((k-2)!)^{-\frac8s}}{(k-1)^{4}}\ge
\frac{((k-2)!)^{-\frac8s}}{(k-1)^{\frac8s}} = ((k-1)!)^{-\frac8s} = d_k+\de_k.
\]
Since all other conditions of both parts of
Lemma~\ref{l:Falconer} are clearly satisfied, we can apply the lemma to get
\begin{align*}
\ubdim T &\le\limsup_{k\to\infty}
    \frac{\log(l_1\cdots l_{k})}
         {-\log d_{k}}\\
&\le \limsup_{k\to\infty} \frac{4\log(k!)}{-\log \be_{k}}\\
&=\limsup_{k\to\infty} \frac{4\log(k!)}{\frac8s\log ((k-1)!)}=\frac s2,\\
\\
\hdim T&\ge\liminf_{k\to\infty}
    \frac{\log(l_1\cdots l_{k})}
         {-\log (l_{k+1} \de_{k+1})}\\
         &=\liminf_{k\to\infty} \frac{4\log(k!)}{-\log \be_{k+1}}\\
         &=\liminf_{k\to\infty} \frac{4\log(k!)}{\frac8s\log (k!)}=\frac s2.
\end{align*}
Using the product formulas for upper box and Hausdorff dimension, we get that
\begin{align*}
\ubdim S&=\ubdim(T\times T)\le \ubdim T + \ubdim T\le \frac s2 +
\frac s2 = s,\\
\hdim S&=\hdim(T\times T)\ge \hdim T + \hdim T \ge \frac s2 +
\frac s2 = s.
\end{align*}
By the inequalities between dimensions, this implies that
$\hdim S=\pdim S=\bdim S=s$.

By applying Lemma~\ref{l:Falconer}(a) to $Q_k=\frac{\be_k}{k^4}D_k$,
$P=A$,
$\de_k=\frac{\be_k}{k^4}$, $d_k=3\be_k=3((k-1)!)^{-\frac8s}$ and
$l_k=|D_k|\le Ck^3$ we get
$$
\ubdim A\le\limsup_{k\to\infty}
    \frac{\log(l_1\cdots l_{k})}
         {-\log d_{k}}
\le \limsup_{k\to\infty} \frac{3\log(k!)+k\log C}{\frac8s\log ((k-1)!)+\log 3}
=\frac{3s}8.
$$
This implies that
$\ubdim B=\ubdim(A\times A)\le \ubdim A+\ubdim A\le \frac{3s}4$.

Combining the above results with Theorem~\ref{t:vertices}(b), we get that
\begin{align*}
\pdim B&\ge \frac34\pdim S=\frac34s\ge \ubdim B \ge \pdim B,\\
\lbdim B&\ge \frac34\lbdim S=\frac34s\ge \ubdim B \ge \lbdim B,
\end{align*}
thus
$\pdim B=\bdim B=\frac{3s}4$. Similarly, using
Proposition~\ref{p:dimensions-1Dcenters},
we get
\begin{align*}
\pdim A&\ge \frac38\pdim S=\frac38s\ge \ubdim A \ge \pdim A,\\
\lbdim A&\ge \frac38\lbdim S=\frac38s\ge \ubdim A \ge \lbdim A,
\end{align*}
thus
$\pdim A=\bdim A=\frac{3s}8$.
\ep

\begin{remark}
Note that in the proof $r\ge \sum_{k=1}^\infty 1\cdot \frac{\be_k}{k^4}$,
which is
a positive number that depends only on $s$. So in the above constructions
we do not need
to use small $r$ or small squares.
\end{remark}

As a corollary, we obtain the following strong version of Theorem \ref{t:packingbox-arbitrary-centers}(b').
\begin{corollary}\label{t:example-packing-boundary}
For any $s\in [0,2]$ there exist compact sets $S,B\su\R^2$
such that $\hdim(S)=\bdim(S)=\pdim(S)=s$,
$\bdim(B)=\pdim(B)=1+\frac{3s}{8}$ and
$B$ contains a square centered at
every point of $S$.
\end{corollary}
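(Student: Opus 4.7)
The plan is to assemble $B$ from the compact set $A \subset \R$ provided by Theorem~\ref{t:4examples}(c). That result supplies, for the given $s \in [0,2]$, compact sets $A \subset \R$ and $S \subset \R^2$ with $\hdim S = \bdim S = \pdim S = s$, $\bdim A = \pdim A = \tfrac{3s}{8}$, and the property that for every $(x,y) \in S$ there exists $r>0$ with $x \pm r,\, y \pm r \in A$. Since $S$ is compact and the radii $r$ that arise stay bounded (they are coordinate differences of points in the bounded set $A$), I can choose a compact interval $I \subset \R$ so large that $[x-r,x+r] \cup [y-r,y+r] \subset I$ for every such triple $(x,y,r)$. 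I then set
\[
B \;=\; (A \times I) \cup (I \times A),
\]
which is a compact subset of $\R^2$.

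To verify the square property, fix $(x,y) \in S$ together with its corresponding $r$. The top and bottom sides $[x-r,x+r] \times \{y \pm r\}$ lie in $I \times A$ because $y \pm r \in A$ and $[x-r,x+r] \subset I$, while the left and right sides $\{x \pm r\} \times [y-r,y+r]$ lie in $A \times I$ by the symmetric argument. For the dimension computation, the product formulas with the interval $I$ (whose Hausdorff, packing, lower and upper box dimensions are all $1$) yield
\[
\pdim(A \times I) = \bdim(A \times I) = 1 + \tfrac{3s}{8}.
\]
Here the upper bound uses $\pdim(A \times I) \le \pdim A + \pdim I$ and $\ubdim(A \times I) \le \ubdim A + \ubdim I$; the matching lower bound $\pdim(A \times I) \ge \lbdim A + 1$ is an elementary covering/packing count using that a $\delta$-cover of $A \times I$ needs at least $\asymp N_\delta(A)\cdot \delta^{-1}$ pieces, and this equals $\pdim A + 1$ because $A$ was built so that $\lbdim A = \pdim A$. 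The same equalities hold for the strip $I \times A$.

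Finally, finite stability of $\ubdim$ and $\pdim$ under unions gives $\ubdim B = \pdim B = 1 + \tfrac{3s}{8}$; and although $\lbdim$ is not finitely stable in general, the inclusion $A \times I \subset B$ gives $\lbdim B \ge 1 + \tfrac{3s}{8}$, which together with $\lbdim B \le \ubdim B$ forces $\bdim B = 1 + \tfrac{3s}{8}$. I expect no substantial obstacle: the matching lower bounds $\pdim B,\,\bdim B \ge 1 + \tfrac{3s}{8}$ are in any case already furnished by Theorem~\ref{t:packingbox-arbitrary-centers} applied to $S$, so only the upper bounds must be supplied by the explicit construction, and these follow cleanly from the product formula plus finite stability. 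The only care needed is that $I$ be a genuinely compact interval, so that $B$ stays compact; this is guaranteed by the boundedness of $S$ and of the associated side-lengths.
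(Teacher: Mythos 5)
Your construction is essentially the paper's own proof: the paper takes $B=(A\times\R)\cup(\R\times A)$ with $A,S$ from Theorem~\ref{t:4examples}(c), and your replacement of $\R$ by a compact interval $I$ is just the bookkeeping needed to keep $B$ bounded and compact (which the paper leaves implicit via its standing convention on box dimensions). One small caution on two of your side remarks: a $\delta$-covering count lower-bounds $\lbdim$, not $\pdim$ (these two are incomparable in general), and Theorem~\ref{t:packingbox-arbitrary-centers}(a) only yields $\bdim B\ge\max(1,\tfrac{7}{8}s)$, which is strictly less than $1+\tfrac{3s}{8}$ for $s<2$ --- but in each case the alternative justification you give for the same bound ($\pdim B\ge 1+\tfrac{3s}{8}$ via Theorem~\ref{t:packingbox-arbitrary-centers}(b), and $\lbdim B\ge 1+\tfrac{3s}{8}$ via the grid-count on $A\times I\subset B$) is valid, so the proof stands.
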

\bp
Let $A$ and $S$ be the sets from Theorem~\ref{t:4examples}(c),
and let $B=(A\times \R)\cup (\R\times A)$.
\ep

Likewise, we can easily complete the proof of Theorem \ref{t:packingbox-all-centers} by exhibiting the required example.

\bp[Proof of Theorem \ref{t:packingbox-all-centers}]
We have already proved Theorem \ref{t:packingbox-arbitrary-centers}(a),(b), which imply (taking $s=2$) that $\dim B\ge \tfrac{7}{4}$ for $\dim=\ubdim, \lbdim$ or $\pdim$. For the converse, let $A$ be the set from Theorem~\ref{t:4examples}(d), and set $B=(A\times \R)\cup (\R\times A)$.
\ep

\subsection{Countable constructions: the square boundary problem for box dimension}

We have now proved all the main results stated in the introduction, except for Theorem \ref{t:packingbox-arbitrary-centers}(a'). The construction here will be of a different kind: the sets $S, B$ will be countable, and obtained as the union of a sequence of discrete examples of size tending to zero and cardinality tending to infinity, at appropriate rates. These discrete sets are finitary analogs of the set $A$ from Theorem \ref{t:4examples}(d), and are described in the next lemma.

\begin{lemma} \label{l:discrete-dim}
 There exist sets $\{ A_N\}_{N\in\N}$ of natural numbers such that the following holds.
 \begin{enumerate}
  \item[(i)] For every $N$ and every $x,y\in\{0,\ldots,N-1\}$ there is $r\in\{1,\ldots,3N\}$ such that $x-r,x+r,y-r,y+r\in A_N$.
  \item[(ii)] For every $\delta>0$, there is $C=C(\delta)>0$ such that for every $R\in [1,N]$, the set $A_N$ can be covered by $C N^\delta (N/R)^{3/4}$ intervals of length $R$.
 \end{enumerate}
\end{lemma}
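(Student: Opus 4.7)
The plan is to build $A_N$ as a discrete analogue of the continuum set from Theorem~\ref{t:4examples}(d), by taking a Minkowski sum of suitably scaled copies of the building blocks $D_k$ from Lemma~\ref{l:D_k}. Choose $K=K(N)$ to be the least integer with $(K!)^4\ge N$, so that $K=O(\log N/\log\log N)$; let $m=\lceil N/((K-1)!)^4\rceil$ (note that $m\le K^4$), and set $M=\lceil m^{1/4}\rceil$. Define place values $L_K=1$, $L_{K-1}=m$, and $L_{k-1}=k^4 L_k$ for $k=2,\ldots,K-1$, so that every $x\in\{0,\ldots,N-1\}$ admits a mixed-radix expansion $x=\sum_{k=2}^{K-1} L_k x_k+L_K x_K$ with $x_k\in\{0,\ldots,k^4-1\}$ for $k<K$ and $x_K\in\{0,\ldots,m-1\}$. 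Then let
\[
A_N=\Bigl(\sum_{k=2}^{K-1} L_k\cdot D_k\Bigr)+L_K\cdot D_M,
\]
interpreted as a Minkowski sum in $\Z$ (shift by a universal constant if $A_N\subset\N$ is required).

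To check (i), given $x,y\in\{0,\ldots,N-1\}$ expanded in mixed-radix as above, at each level $k<K$ Lemma~\ref{l:D_k} applied to $D_k$ produces $r_k\in\{1,\ldots,k^4\}$ with $x_k\pm r_k,\,y_k\pm r_k\in D_k$, while at the top level, applying the lemma to $D_M$ (whose range $\{0,\ldots,M^4-1\}$ covers $\{0,\ldots,m-1\}$) yields $r_K\in\{1,\ldots,M^4\}$; setting $r=\sum_k L_k r_k$, the Minkowski structure gives $x\pm r,\,y\pm r\in A_N$. For the bound $r\le 3N$, the recursion $L_k k^4=L_{k-1}$ telescopes $\sum_{k=2}^{K-1}L_k k^4=L_1+L_2+\cdots+L_{K-2}$, and since $L_k/L_1=1/(k!)^4$ one has $\sum_{k=1}^\infty 1/(k!)^4<1.1$, so this sum is below $1.1\,L_1$; meanwhile $L_1=m((K-1)!)^4\le 2N$ by the choice of $m$ (using $(K-1)!^4<N$), and the top contribution is $L_K M^4\le 16m\le 16K^4=o(N)$. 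Hence $r<2.2N+o(N)\le 3N$ for $N$ sufficiently large; the finitely many small $N$ are handled separately (e.g., by $A_N=D_{\lceil N^{1/4}\rceil}$).

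For (ii), given $R$ choose $j$ with $L_{j+1}\le R\le L_j$, and split each element of $A_N$ into a \emph{coarse} part $\sum_{k=2}^{j}L_k d_k$ and a \emph{fine} part $\sum_{k=j+1}^{K}L_k d_k$ (with the convention $d_K\in D_M$). The fine part lies in an interval of length at most $O(L_j)$, so at scale $R\ge L_{j+1}$ it occupies at most $O(L_j/R)\le O((j+1)^4)$ many $R$-intervals, while the number of distinct coarse parts is at most $\prod_{k=2}^{j}|D_k|\le C^j(j!)^3$. Multiplying yields
\[
N_R(A_N)\le O\bigl(C^j(j+1)^4(j!)^3\bigr).
\]
Using $L_1/L_j=(j!)^4$ and $L_1\in[N,2N]$, the target $(N/R)^{3/4}$ is at least $2^{-3/4}(j!)^3$, so the ratio is $O(C^KK^4)=N^{o(1)}$, bounded by $C(\delta)N^\delta$ for any $\delta>0$ and $N$ large.

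The main technical obstacle is the linear-in-$N$ bound $r\le 3N$ in (i): the most obvious construction $\sum_{k=2}^{K} L_k D_k$ with $L_k=(K!/k!)^4$ can give $r$ as large as $L_1=(K!)^4\approx K^4 N$, overshooting $3N$ by a polylog factor. The truncation at the top level to a partial $m$-digit handled by $D_M$ (in place of a full $K^4$-digit handled by $D_K$) is precisely what collapses this $K^4$ loss, at the price of a polylogarithmic factor in the covering estimate that the $N^\delta$ slack in (ii) comfortably absorbs.
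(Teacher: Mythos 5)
Your proof is correct and follows essentially the same route as the paper's: $A_N$ is a Minkowski sum of scaled copies of the $D_k$, property (i) comes from applying Lemma~\ref{l:D_k} digit by digit and telescoping the place values $\sum_k L_k k^4 = \sum_k L_{k-1} \le (\sum_j 1/(j!)^4)L_1$, and property (ii) comes from covering by the product $\prod_{k\le j}|D_k|=O(1)^j (j!)^3$ at factorial scales, with the $N^\delta$ slack absorbing all interpolation losses. Your truncated top digit handled by $D_M$ is a more careful treatment of general $N$ than the paper's brief ``interpolating $N$ between consecutive values of $(p!)^4$'', and it cleanly removes the polylogarithmic overshoot in the bound $r\le 3N$ that a naive rounding of $N$ up to $((p+1)!)^4$ would incur.
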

\bp
If we wanted the above only for $R=1$, then the sets $D_k$ from Lemma \ref{l:D_k} would suffice. As we will need large values of $R$ as well, we use a modification of the set $A$ from Theorem~\ref{t:4examples}(d) instead. Assume first that $N=(p!)^4$ for some $p\in\N$, and set
\[
 A_N = (p!)^4 \sum_{k=1}^p \frac{D_k}{(k!)^4},
\]
where $D_k$ is the set from Lemma \ref{l:D_k}. Claim (i) follows just like in the corresponding statement for $A$ in the proof of Theorem~\ref{t:4examples}.

For the second part, note that if $j\in\{1,\ldots, p\}$, then $A_N$ can be covered by
\[
|D_1|\cdots|D_j| = O(1)^j (j!)^3
\]
intervals of length
\[
(p!)^4 \sum_{k=j+1}^p \frac{(3k)^4}{(k!)^4}  \le \frac{200 (p!)^4}{(j!)^4}.
\]
This gives the claim (ii) in the case $R=R_j:=200 (p!/j!)^4$. We now consider a general $R$; we may assume that $R\in [200,N]$. Pick $j\in\{1,\ldots,p-1\}$ such that $R_{j+1}\le R\le R_j$. Then $A_N$ can be covered by $O(1)(N/R_{j+1})^{3/4}$ intervals of length $R_j$. Since $\log(j+1)!/\log j! \to 1$ as $j\to\infty$, for every $\delta>0$ there is $C>0$ such that $R_j < C\, R_{j+1}^{(1+\delta)}\le O(N^\delta)R_{j+1}$. We conclude that $A_N$ can be covered by $O(N^{2\delta})(N/R)^{3/4}$ intervals of length $R$, which yields the claim when $N=(p!)^4$.

The case of general $N$ follows in the same way, interpolating $N$ between consecutive values of $(p!)^4$ and using that $\log(p+1)!/\log p!\to 1$ as $p\to\infty$.
\ep

\begin{proof}[Proof of Theorem \ref{t:packingbox-all-centers}(a')]
The case $s=0$ is trivial, and we have already seen the case $s=2$. Hence we assume $s\in (0,2)$ and pick $\alpha>0$ such that $s=\frac{2\alpha}{1+\alpha}$.

For each $k\in\N$, let $N_k=\lfloor 2^{\alpha k}\rfloor$, and define
\begin{align*}
S_k &= \{ 0,\ldots, N_k-1\}\times \{ 0,\ldots, N_k-1 \},\\
B_k &= A_{N_k}\times [-3N_k,4N_k] \cup [-3N_k,4 N_k]\times A_{N_k},
\end{align*}
where $A_N$ are the sets given by Lemma \ref{l:discrete-dim}.  Let $\eps_k=2^{-(1+\alpha)k}$, and define
\begin{align*}
S &= \{0\}\cup \bigcup_{k=1}^\infty (2^{-k},0)+ \eps_k S_k,\\
B &= C_0 \cup \bigcup_{k=1}^\infty (2^{-k},0)+ \eps_k B_k,
\end{align*}
where $C_0$ is the boundary of a square of unit side-length centered at the origin. The sets $S,B$ are clearly compact.

Firstly, it follows from Lemma \ref{l:discrete-dim}(i) that $B_k$ contains a square boundary with center in every point of $S_k$, and hence $B$ contains a square boundary with center in every point of $S$.

In light of Theorem~\ref{t:packingbox-arbitrary-centers}(a), we only need to show that $\lbdim S\ge s$ and $\ubdim B\le \max(1,7s/8)+O(\delta)$ where $\delta>0$ is arbitrary.  For the first part, note that since $S$ contains a translate of $\eps_k S_k$, it contains $|N_k|^2=\Omega(2^{2\alpha k})=\Omega(\eps_k^{-s})$ points at pairwise distance at least $\eps_k$. Interpolating an arbitrary $\eps\in (0,1)$ between consecutive values of $\eps_k$, we deduce that $\lbdim S \ge s$.

Let us now count how many squares of side-length $\eps_k$ are required to cover $B$. Split $B=B'_k\cup B''_k\cup B'''_k$, where
\begin{align*}
B'_k &= C_0 \cup \bigcup_{j=1}^{k-1} (2^{-j},0)+ \eps_j B_j, \\
B''_k &= \bigcup_{j:\eps_j\le \eps_k\le N_j\eps_j} (2^{-j},0)+ \eps_j B_j,\\
B'''_k &= \bigcup_{j:N_j\eps_j<\eps_k} (2^{-j},0)+ \eps_j B_j.
\end{align*}
For $j<k$, the set $\eps_j B_j$ consists of $2|A_{N_j}|$ segments of length $O(\eps_j N_j)$. Since $|A_{N_j}|=O(2^{3\alpha j/4+\alpha\delta})$ by Lemma \ref{l:discrete-dim} applied to $R=1$, $\eps_j B_j$ can be covered by
\[
O(1)2^{3\alpha j/4+\alpha\delta}\frac{\eps_j N_j}{\eps_k} = O(1) 2^{(3\alpha/4-1+\alpha\delta)j}\eps_k^{-1}
\]
balls of radius $\eps_k$. Note that $3\alpha/4-1<0$ if and only if $s<8/7$, and in this case (taking $\delta$ small enough depending on $s$) $B'_k$  can be covered by  $O(\eps_k^{-1})$ balls of radius $\eps_k$. Otherwise, if $s\ge 8/7$, then $B'_k$ can be covered by
\[
O(1) k 2^{(3\alpha/4-1+\alpha\delta)k}\eps_k^{-1}= k 2^{\alpha\delta k} O(2^{7\alpha k/4}) = O(\eps_k^{-7s/8-O(\delta)})
\]
balls of radius $\eps_k$.

We now look at $B''_k$. Hence, suppose $\eps_j\le\eps_k\le N_j\eps_j$, and let $R\in [1,N_j]$. By the second part of Lemma \ref{l:discrete-dim}, the set $A_{N_j}$ can be covered by $O(1)N_j^\delta(N_j/R)^{3/4}$ intervals of length $R$, so it follows that $B_j$ can be covered by $O(N_j^\delta)O(N_j/R)(N_j/R)^{3/4}$ balls of radius $R$, and therefore $\eps_j B_j$ can be covered by $O(N_j^\delta)(N_j/R)^{7/4}$ balls of radius $\eps_j R$. We apply this to $R=\eps_k/\eps_j$, which is indeed in $[1,N_j]$, and deduce that $\eps_j B_j$ can be covered by
\[
O(N_j^\delta)\left(\frac{N_j \eps_j}{\eps_k}\right)^{7/4} = O(1) \left(2^{(1-O(\delta))j}\eps_k\right)^{-7/4}
\]
balls of radius $\eps_k$. By taking $\delta$ small enoguh and adding over $j\ge k$, it follows that $B''_k$ can be covered by
\[
O(1) \left(2^{(1-O(\delta))k}\eps_k\right)^{-7/4} = O(1) 2^{(7\alpha/4+O(\delta))k} = O(1) \eps_k^{-7s/8-O(\delta)}
\]
balls of radius $\eps_k$.

Finally, the smallest $j$ such that $N_j \eps_j < \eps_k$ satisfies $2^{-j}=O(\eps_k)$, whence $B'''_k$ has diameter $O(\eps_k)$ and thus can be covered by $O(1)$ balls of radius $\eps_k$.

We conclude that $B$ can be covered by $O(1)\max(\eps_k^{-1})$ balls of radius $\eps_k$ if $s<8/7$, and by $O(1)\eps_k^{-7s/8-O(\delta)}$ balls of radius $\eps_k$ if $s\ge8/7$. This implies that $\ubdim B\le \max(1,7s/8+O(\delta))$, which is what we wanted to show.
\ep

\section{Concluding remarks and open questions} \label{sec:remarks}

Our results suggest a number of problems and future directions of work. We finish by briefly discussing some of these.

All Kakeya-type problems have an associated maximal operator, defined (using the notation from the introduction) as $M f(x)= \sup_{C\in\mathcal{F}_x} A(f,C)$, where $f$ is a continuous function on the corresponding space, and $A(f,C)$ is the average of $f$ over the set $C$ (defined using the natural measure on these sets). One is interested in bounding these operators in $L^p$ for suitable values of $p$, or appropriate discretizations when they are unbounded, and this typically implies a lower bound on the size of the packing sets $B$ as a corollary (see e.g. \cite{Wolff99} for some examples). In our setting, the first guess for maximal operator might be
\[
 M_{\eps}f(x) = \sup_{r>0} \frac{1}{4\eps r} \int_{S_\eps(x,r)} f \,dx,
\]
where $S_\eps(x,r)$ is the $\eps$-neighborhood of the square $S(x,r)$ with center $x$ and side length $r$. However, if we take
$f$ as
the indicator of a small neighborhood of $\ell=[-1,2]\times \{0\}$, then we see that $Mf(x) \ge \tfrac{1}{4}$ for all $x\in [0,1]^2$ (just consider the square with one side inside $\ell$). In other words, we are obtaining a constant lower bound for a function with very small norm from just one side of the squares involved; this leads to unnatural (and trivial) results, analog to the ones for Hausdorff dimension. A more interesting operator would then be
\[
 M_{\eps}f(x) = \sup_{r>0} \min_{j=1}^4 \frac{1}{\eps r} \int_{S_\eps^{(j)}(x,r)} f \,dx
\]
where $S^{(j)}_\eps(x,r)$ are the $\eps$-neighborhoods of the sides of the square  $S(x,r)$. This maximal operator forces us to take into consideration all four sides of the square, but it is somewhat awkward, as it is not sub-linear. Nevertheless, it would be interesting to understand its behavior.

Another natural direction to pursue is to allow squares with arbitrary orientations. We have seen in Proposition \ref{p:2D-hausdorff-rotated} that, at least for some of the problems involving vertices of a square, allowing rotations does change the answer. Nevertheless, it seems geometrically clear that for problems involving the whole square boundaries, one obtains maximal overlap between squares if they all share the same orientation, so the optimal configuration should be of this form and the results should not change if rotations are allowed. So far we have not been able to prove this rigorously, although it is not hard to obtain lower bounds which show that the lower box counting dimension in the case when there are centers in all points of $[0,1]^2$ still has to be much larger than $1$.

A final set of questions concerns what happens if squares are replaced by other polygons or by cubes or polyhedra. Theorem \ref{t:Hausdorff} can be seen to hold for arbitrary polygons with a similar proof, but we do not know much for the other problems. A random construction can be used to show that for any
$n$-gon
$P$, there exists a set $B\subset\R^2$ with $\hdim B= \ubdim B= 2-\tfrac{1}{n}$ which 
contains a homothetic copy of $P$ with center in every point of $[0,1]^2$, but we know little about the opposite inequality. For triangles, some results are easier; for example, if a set $B$ contains a homothetic copy of a fixed triangle with center in any point of another set $S$, then $\pdim B\ge 1+\tfrac{1}{3}\pdim S$. The idea is to consider the sets $A_j$, $j=1,2,3$ that parametrize the lines containing the corresponding sides of the triangles that make up $B$, and observe that one can recover $S$ from $A_1\times A_2\times  A_3$ (more precisely, there is a Lipschitz map $A_1\times A_2\times A_3\to S$, given by mapping the triple of lines to the center of the triangle they determine). This implies that for some $j$, $\pdim A_j\ge \tfrac{1}{3}\pdim S$ and the result follows in a similar way to the proof of Theorem \ref{t:packingbox-arbitrary-centers}(b).


\end{document}